\newcommand{\LeftEqNo}{\let\veqno\@@leqno}
\theoremstyle{definition}
\newtheorem{thm}{Theorem}
\newtheorem{rem}[thm]{Remark}
\newtheorem{lem}[thm]{Lemma}
\newtheorem{prop}[thm]{Proposition}
\newtheorem{cor}[thm]{Corollary}
\newcommand\R{{\mathbb{R}}}
\newcommand\C{{\mathbb{C}}}
\newcommand\N{\mathbb{N}}
\newcommand\Z{\mathbb{Z}}
\newcommand{\wt}{\widetilde}
\newcommand{\widebar}[1]{\mbox{\kern1pt\hbox{\vbox{\hrule height 0.5pt \kern0.25ex
        \hbox{\kern-0.05em \ensuremath{#1 }\kern-0.05em}}}}\kern-0.1pt}
\newcommand{\dx}{\, {\rm d} x }
\newcommand{\abs}[1]{\left\vert #1 \right\vert}
\newcommand{\norm}[1]{\left\Vert #1 \right\Vert}
\newlength{\fixboxwidth}
\renewcommand{\rho}{{\varrho }}
\newcommand{\PreserveBackslash}[1]{\let\temp=\\#1\let\\=\temp}
\newcolumntype{R}[1]{>{\PreserveBackslash\raggedleft}p{#1}}
\title{Optimal Quadrature Formulas for the Sobolev Space $H^1$}
\author{
Erich Novak\footnote{This
author was partially supported by the DFG-Priority Program 1324.},\\
Mathematisches Institut, Universit\"at Jena\\
Ernst-Abbe-Platz 2, 07743 Jena, Germany\\
email: erich.novak@uni-jena.de \\
Shun Zhang\footnote{This author was partially
supported by the National Natural Science Foundation of China
(Grant No. 11301002).
}\\
School of Computer Science and Technology, Anhui
University,\\
Hefei 230601, China\\
email:\ shzhang27@163.com}
\begin{document}
\maketitle

\centerline{Dedicated to Henryk Wo\'zniakowski on the occasion of his 70th birthday}

\begin{abstract}
We study optimal quadrature formulas for arbitrary
weighted integrals and integrands from the Sobolev space $H^1([0,1])$.
We obtain general formulas for the worst case error depending on
the nodes $x_j$.
A particular case is the computation of Fourier coefficients,
where the oscillatory weight
is given by $\rho_k(x) = \exp(- 2 \pi i k x)$.
Here we study the question whether equidistant nodes are optimal
or not. We prove that this depends on $n$ and $k$:
equidistant nodes are \emph{optimal}
if $n \ge  2.7 |k| +1 $ but might be suboptimal for small $n$.
In particular, the equidistant nodes
$x_j = j/ |k|$ for $j=0, 1, \dots , |k| = n+1$ are the \emph{worst}  possible nodes
and do not give any useful information.
To characterize the worst case function we use certain
results from the theory of weak solutions of boundary value problems
and related quadratic extremal problems.
\end{abstract}

\section{Introduction}

We know many results about optimal quadrature formulas, see
Brass and Petras \cite{BP11} for a recent monograph.
This book also contains important results
for the approximation of Fourier coefficients of periodic functions,
mainly for equidistant nodes.
As a general survey for the computation of oscillatory integrals
we recommend
Huybrechs and Olver~\cite{HO09}.

It follows from results of
\cite{NUW15,NUWZ15} that equidistant nodes lead to quadrature formulas that are
asymptotically optimal
for the standard
Sobolev spaces $H^s([0,1])$ of periodic functions and also for $C^s$ functions.

We want to know whether equidistant nodes are optimal or not.
${\rm\check{Z}}$ensykbaev \cite{Zen77} proved that for the classical (unweighted)
integrals of periodic functions from the Sobolev space $W^r_p([0,1])$,
the best quadrature formula of the form $A_n(f)=\sum_j a_j f(x_j)$
is the rectangular formula with equidistant nodes.
Algorithms with equidistant nodes were also studied by Boltaev, Hayotov and Shadimetov~\cite{BHS16b}
for the numerical calculation of Fourier coefficients.
In their paper not only the rectangular formula was studied but all (and optimal)
formulas based on equidistant nodes.
It is not clear however, whether equidistant nodes are optimal or not.

We did not find a computation of the worst case error of optimal quadrature formulas
for general $x_j$
in the literature.
Related to this,
we did not find a discussion about whether equidistant nodes are optimal for oscillatory integrals or not.
We find it interesting that the results very much
depend on the frequency of oscillations and the number of nodes.

This paper has two parts.
In the first part,
we present general formulas for the worst case error for arbitrary weighted
integrals in the Sobolev space $H^1$ for arbitrary nodes.
In the second part we consider oscillatory integrals
and prove that equidistant nodes
are optimal for relatively large $n$,
but can be very bad for small $n$.

We now describe our results in more detail.
We study optimal algorithms
for the computation of integrals
\[
I_\rho(f)=\int_0^1 f(x) \rho(x) \dx ,
\]
where the density $\rho$ can be an arbitrary integrable function.
We assume that the integrands are from the Sobolev space $H^1$ and, for simplicity,
often assume zero boundary values, i.e., $f \in H_0^1$.
Here $H_0^1 = H^1_0 ([0,1])$ is the space of all absolutely continuous
functions with values in $\C$ such that $f^\prime \in L_2$
and $f(0)=f(1)=0$. The norm in $H^1_0$
(and semi-norm in $H^1$)  is given by
$\Vert f \Vert := \Vert f^\prime\Vert_{L_2}$.
We simply write $\Vert \cdot \Vert_2$ instead of $\Vert \cdot \Vert_{L_2}$.

We study algorithms that use a ``finite information''
$N: H^1 \to \C^n$ given by
\[
N(f)= (f(x_1),\dots,f(x_n)) .
\]
We may assume that
\[
  0 \le x_1 < x_2 < \dots < x_n \le 1.
\]

We prove general results for the worst case error for arbitrary
$\rho$ and nodes $(x_j)_j$ and then study in more detail
integrals with the density function
$\rho_k(x) = \exp(- 2 \pi i k x)$.
Here we want to know whether equidistant nodes
are optimal or not.
We shall see that this depends on $n$ and $k$:
equidistant nodes are optimal
if $n \ge  2.7 |k| + 1$ but might be suboptimal for small $n$.
In particular, the equidistant nodes
$x_j = j/ |k|$ for $j=0, 1 \dots , |k| = n+1$ are the \emph{worst}  possible nodes
and do not give any useful information.
To characterize the worst case function we use certain
results from the theory of weak solutions of boundary value problems
and related quadratic extremal problems.

The aim of this paper is to prove some exact formulas on the
$n$th minimal (worst case) errors, for $F\in\{H^1_0,H^1\}$,
\[
e(n, I_\rho , F) :=
\inf_{A_n}\, \sup_{f\in F:\, \|f\|_F\le1}\, |I_\rho (f) - A_n(f)|.
\]
This number is
the worst case error on the unit ball of $F$
of an optimal algorithm $A_n$ that uses at most $n$ function values
for the approximation of the functional $I_\rho$.
The initial error is given for $n=0$ when we do not sample the
functions. In this case the best we can do is to take
the zero algorithm $A_0(f)=0$,
and
\[
e(0, I_\rho , F) :=
\sup_{f\in F:\, \|f\|_F\le1}\, |I_\rho (f)| = \| I_\rho \|_{F}.
\]


Let us collect the main results of this paper:
\begin{enumerate}[(i)]
\item
For general (integrable) weight functions
$\rho : [0,1 ] \to \C$, we derive formulas
for the initial error
(Proposition~\ref{prop1}) and for the  radius of information
(worst case error of the optimal algorithm)
for arbitrary nodes
(Theorem~\ref{thm2}).

\item
We study oscillatory integrals with the weight
function
$\rho_k(x) = \exp(- 2 \pi i k x)$
for the space $H^1_0([0,1])$.
In Proposition~\ref{prop2}
we compute the
initial error
for  $k\in\Z\backslash\{0\}$
and the main result  is  Theorem~\ref{thm3} for  $k\in\R\backslash\{0\}$,
where we prove that equidistant nodes are optimal if
$n \ge 2.7 |k|-1$.

\item
Then we study the full space
$H^1([0,1])$ and again prove that equidistant nodes are
optimal for $k\in\R\backslash\{0\}$ and large $n$.
See
Theorem~\ref{thm5} for the details.
We could prove very similar results also for the subspace of $H^1([0,1])$ of  periodic functions
or for functions with a boundary value (such as $f(0) =0$).
Since the results and also the proofs are similar, we skip the details.

\item
In Section 4 we discuss results for equidistant nodes $x_j = j/n$, for $j = 0, 1, \dots , n$,
and prove certain asymptotic results (which are the same for equidistant and optimal nodes).
In particular we obtain
$$
\lim_{|k| \to \infty} e(n, I_{\rho_k} , H^1) \cdot  |k| =  \frac{1}{2 \pi}
$$
for each fixed $n$ and
$$
\lim_{n \to \infty} e(n, I_{\rho_k} , H^1 ) \cdot n = \frac{1}{2 \sqrt3}
$$
for each fixed $k\in\R\backslash\{0\}$.
\end{enumerate}

\section{Arbitrary density functions}\label{Se:arbitrary}

We start with
\[
I_\rho(f)=\int_a^b f(x) \rho(x) \dx
\]
for
$f\in H_0^1([a,b])$ and want to compute
the so called initial error
\[
e_0 : = \sup_{\Vert f \Vert \le 1 } |  I_\rho (f) |  .
\]
Since the complex valued case is considered here,
the inner product in the spaces $H_0^1([a,b])$ is given by
\[
\langle f,g \rangle = \int_a^b f^\prime(x) \overline{g^\prime(x)} \dx .
\]
Using the integration by parts formula we see that
the initial error is given by
\[
e_0 =
\sup\limits_{\norm{f^\prime}_2\le 1 \atop \int_a^b f^\prime =0} \abs{\int_a^b f^\prime(x) \cdot R(x) \dx } ,
\]
where  $R(t)= \int_a^t \rho(x) \dx$  for $t \in [a,b]$.
To solve the extremal problem
$$
\sup\limits_{\norm{g}_2\le1 \atop \int_a^b g=0} \abs{  \int_a^b g(x)R(x) \dx },
$$
we decompose $R$ into a constant $c$ and an orthogonal
function $\wt R$,
$R=\wt R + c$, hence $c=\frac1{b-a} \int_a^b R(x) \dx,\
\wt R = R-c$ and $\int_a^b \wt R(x) \dx = 0$.
It then follows from the Cauchy-Schwarz inequality that
every
$g^\ast= \gamma  \frac{\wt R}{\Vert \wt R\Vert_2}$
with
$|\gamma| = 1$
solves the extremal problem and the respective maximum is $\norm{R-c}_2 = \Vert \wt R \Vert_2 $.

We define $f^*$ by
\[
f^*(t) = - \int_a^t \frac{\overline{R(x)}-\overline{c}}{\norm{R-c}_2} \dx .
\]
Then $f^*(a) = f^*(b) = 0$ and $f^* \in H^1_0 ([a,b])$. Further,
\[
\begin{split}
\int_a^b f^* (x) \rho(x) \dx
&= \, \int_a^b f^* (x) \, {\rm d} R(x)\\
&= \, f^* (x) R(x)|_a^b - \int_a^b (f^*)'(x) R(x) \dx
\\
&= \, - \int_a^b (f^*)'(x) (R(x)-c) \dx
\\
&= \, \int_a^b \frac{\overline{R(x)}-\overline{c}}{\norm{R-c}_2} \big( R(x)-c \big) \dx
\\
&= \, \norm{R-c}_2.
\end{split}
\]

\begin{rem}\label{re:Ra}
It is easy to check that the property $R(a)=0$ is not used
in the above computations.
Therefore it is not important what is chosen as
the lower limit of the integral in the definition of $R$.
\end{rem}

Hence we have proved the following proposition.

\begin{prop}   \label{prop1}
Consider $I_\rho: H^1_0 ([a,b]) \to \C$
with an integrable density function $\rho$.
Then
\[
e_0 =  \sup_{\Vert f \Vert \le 1 } | I_\rho (f) | =
\Vert R-c \Vert_2 ,
\]
where
$R(t)= \int_a^t \rho(x) \dx$  for $t \in [a,b]$
and
$c=\frac1{b-a} \int_a^b R(x) \dx$.
Moreover the maximum is assumed for $f^* \in H^1_0 ([a,b])$, given by
\[
f^*(t) = - \int_a^t \frac{\overline{R(x)}-\overline{c}}{\norm{R-c}_2} \dx ,
\]
i.e., $I_\rho (f^*) = \Vert R-c \Vert_2$
and
$\Vert f^* \Vert=1$ with $f^*(a)=f^*(b)=0$.
\qed
\end{prop}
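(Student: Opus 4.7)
The plan is to turn the supremum defining $e_0$ into a purely $L_2$ extremal problem via integration by parts, then solve that problem using Cauchy--Schwarz after subtracting the mean of $R$.

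First I would rewrite $I_\rho(f)$ in a form that exposes the natural variable $f'$. Since $R$ is an antiderivative of $\rho$ and $f(a)=f(b)=0$, integration by parts gives
\[
I_\rho(f) = \int_a^b f(x)\,dR(x) = -\int_a^b f'(x)\, R(x)\, dx,
\]
with no boundary contribution. Thus
\[
e_0 = \sup\Bigl\{\, \Bigl|\int_a^b f'(x)\, R(x)\, dx\Bigr| : \|f'\|_2 \le 1,\ \int_a^b f'(x)\, dx = 0 \Bigr\},
\]
where the mean-zero constraint on $f'$ is exactly the requirement $f(b)-f(a)=0$ together with $f(a)=0$ embedding $H^1_0$ into the mean-zero subspace of $L_2$-derivatives. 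Conversely, any $g \in L_2([a,b])$ with $\int g = 0$ is the derivative of some $f \in H^1_0$ (take $f(t)=\int_a^t g$), so this reformulation is a bijection onto the relevant set.

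Next I would solve this reduced extremal problem. Writing $R = \widetilde R + c$ with $c = \frac{1}{b-a}\int_a^b R$ and $\int_a^b \widetilde R = 0$, the orthogonality $\int f' \cdot c = c\int f' = 0$ allows me to replace $R$ by $\widetilde R$:
\[
\int_a^b f'(x)\, R(x)\, dx = \int_a^b f'(x)\, \widetilde R(x)\, dx.
\]
Cauchy--Schwarz then yields the upper bound $\|\widetilde R\|_2 = \|R-c\|_2$, and this bound is sharp precisely when $f'$ is a unit-length multiple of $\overline{\widetilde R}/\|\widetilde R\|_2$; one must check this candidate genuinely lies in the feasible set, i.e.\ it has mean zero, which is immediate from $\int \widetilde R = 0$.

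Finally, I would exhibit $f^*$ by integrating the optimal derivative: taking
\[
(f^*)'(x) = -\frac{\overline{R(x)}-\overline c}{\|R-c\|_2}
\]
and setting $f^*(t) = \int_a^t (f^*)'(x)\, dx$ gives $f^*(a)=0$, $\|f^*\| = 1$, and $f^*(b)=0$ because $(f^*)'$ has mean zero. Substituting back verifies $I_\rho(f^*) = \|R-c\|_2$, matching the upper bound. The only mildly subtle point is the reduction step where one uses the constraint $\int f' = 0$ to remove $c$; without it one would get the worse bound $\|R\|_2$, so the mean-subtraction is the heart of the argument.
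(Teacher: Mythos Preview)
Your proof is correct and follows essentially the same route as the paper: integration by parts to reduce to an $L_2$ extremal problem with the mean-zero constraint $\int_a^b f'=0$, the decomposition $R=\widetilde R+c$ to eliminate that constraint, Cauchy--Schwarz for the sharp bound, and explicit construction of $f^*$ by integrating the optimal derivative. The only difference is cosmetic: you are slightly more explicit about the bijection between $H^1_0$ and mean-zero $L_2$ derivatives, whereas the paper verifies $I_\rho(f^*)=\|R-c\|_2$ by a direct computation using $dR(x)=\rho(x)\,dx$.
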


The initial error $e_0$ clearly depends
on $a$, $b$ and $\rho$ and later we will write
$e_0(a,b,\rho)$ for it.

We are in a Hilbert space setting
(with the two Hilbert spaces $H=H^1([0,1])$ and
$H^1_0([0,1])$) and the structure
of optimal algorithms
$A = \phi \circ N$, for a given
information $N: H \to \C^n$, is known:
the spline algorithm is optimal and the spline
$\sigma$ is continuous and piecewise linear,
see \cite[Cor. 5.7.1]{TWW88} and \cite[p. 110]{TW80}.

More exactly, if $N(f)= y \in \C^n $ are the function values
at $(x_1, \dots , x_n)$, then
$A(f) = \phi(y) = I_\rho( \sigma)$.
In the case $H=H_0^1([0,1])$ the spline
$\sigma$ is given by
$\sigma(0)=\sigma(1)=0$ and $\sigma(x_i) = f(x_i) = y_i$
and piecewise linear.
In the case $H=H^1([0,1])$ the spline is constant
in $[0,x_1]$ and $[x_n, 1]$, otherwise it is the same
function as in the case $H=H_0^1([0,1])$.

Moreover, we have the general formula for the worst case error
of optimal algorithms $A$
\[
\sup_{\Vert f \Vert_H \le 1} |I_\rho(f) - A(f) | =
\sup_{\Vert f \Vert_H \le 1, \ N(f)=0}  | I_\rho (f) |.
\]
This number is also called the radius $r(N)$ of the information $N$
and to distinguish the two cases,
we also write $r(N, H^1)$ and $r(N, H^1_0)$, respectively,
see \cite[Thm. 5.5.1 and Cor. 5.7.1]{TWW88} and \cite[Thm. 2.3 of Chap. 1]{TW80}.

We are ready to present a general formula
for $r(N, H^1_0)$ and afterwards solve another
extremal problem to present the formula for
$r(N, H^1)$.

We put $x_0=0$ and $x_{n+1}=1$ and then have $n+1$ intervals $I_j = [x_j, x_{j+1}]$,
where $j=0,1,\cdots,n$.
For the norm $\norm{f}:= \norm{f^\prime}_2$, the worst case function $f^*_j$  is, on any interval $I_j$,
as in Proposition~\ref{prop1}.
The norm of $f^*_j$ is one and the integral is
$e_0(x_j,x_{j+1},\rho)=: c_j$.
Then the radius of information of the information $N$  is given by
$$
r(N) = \max\limits_{\alpha_j \ge 0 \atop \sum \alpha_j^2=1} \sum_j
\alpha_j \,   e_0( x_j, x_{j+1}, \rho)
$$
and it is easy to solve this extremal problem.
The
maximum is taken for
$\alpha_j = (\sum_j c_j^2)^{-1/2} c_j$
and then the total error is the radius of information,
$r(N)=\sum_j \alpha_j c_j =  (\sum_j c_j^2)^{1/2}$.
As a result we obtain the following assertion.

\begin{thm}   \label{thm2}
In the case of $H^1_0([0,1])$ the radius of information is given by
\[
r(N) = \left(  \sum_{j=0}^n e_0 (x_j, x_{j+1}, \rho)^2 \right)^{1/2} .
\]
Moreover, the worst case function $f^*$
is given by
\[
f^\ast|_{I_j} = \left( \sum\limits_{j=0}^n c_j^2 \right)^{-1/2} \cdot c_j \cdot f^*_j ,
\]
where $c_j = e_0 (x_j, x_{j+1}, \rho)$.
In particular we have
$f^* \in H^1_0 ([0,1])$ with norm 1 and
$N(f^*)=0$ with $I_\rho (f^*) = r(N)$.
\qed
\end{thm}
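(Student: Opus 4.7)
The plan is to reduce the radius computation on $[0,1]$ to the $n+1$ independent problems on the intervals $I_j = [x_j,x_{j+1}]$ solved by Proposition~\ref{prop1}, then match the upper bound with an explicit piecewise construction.

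First I would unpack what $N(f)=0$ means in the $H^1_0$ case: combined with $f(0)=f(1)=0$, we get $f(x_j)=0$ for every $j=0,1,\dots,n+1$. Hence each restriction $f|_{I_j}$ lies in $H^1_0(I_j)$, and both the functional and the squared norm split as sums over the intervals,
\[
I_\rho(f)=\sum_{j=0}^n \int_{I_j} f(x)\,\rho(x)\dx,\qquad \|f\|^2=\sum_{j=0}^n \|f|_{I_j}\|^2.
\]

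For the upper bound, I would apply Proposition~\ref{prop1} on each interval to obtain
\[
\Bigl|\int_{I_j} f(x)\,\rho(x)\dx\Bigr|\le c_j\,\|f|_{I_j}\|,\qquad c_j:=e_0(x_j,x_{j+1},\rho),
\]
and then use the Cauchy--Schwarz inequality in $\reals^{n+1}$:
\[
|I_\rho(f)|\le \sum_{j=0}^n c_j\,\|f|_{I_j}\|\le \Bigl(\sum_{j=0}^n c_j^2\Bigr)^{1/2}\Bigl(\sum_{j=0}^n \|f|_{I_j}\|^2\Bigr)^{1/2}\le \Bigl(\sum_{j=0}^n c_j^2\Bigr)^{1/2},
\]
whenever $\|f\|\le1$ and $N(f)=0$. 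This gives $r(N)\le\bigl(\sum_j c_j^2\bigr)^{1/2}$.

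For the matching lower bound I would construct the candidate $f^*$ by gluing together the interval-wise extremizers $f^*_j$ supplied by Proposition~\ref{prop1} (each with $\|f^*_j\|=1$, $I_\rho(f^*_j)=c_j$, vanishing at the endpoints of $I_j$) with weights $\alpha_j=c_j/\bigl(\sum_k c_k^2\bigr)^{1/2}$, setting $f^*|_{I_j}=\alpha_j f^*_j$. Because every $f^*_j$ vanishes at both endpoints of $I_j$, the patched function is continuous on $[0,1]$, vanishes at all nodes and the boundary, and therefore lies in $H^1_0([0,1])$ with $N(f^*)=0$. The norm is $\|f^*\|^2=\sum_j \alpha_j^2=1$, and a direct summation gives $I_\rho(f^*)=\sum_j \alpha_j c_j=\bigl(\sum_j c_j^2\bigr)^{1/2}$, proving both the formula for $r(N)$ and the claimed expression for the worst case function.

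There is no serious obstacle here. The only point that deserves care is the verification that gluing the $f^*_j$'s really yields an element of $H^1_0([0,1])$, which is where the fact that each $f^*_j$ has matching zero boundary values is essential; once this is in hand, the equality case in Cauchy--Schwarz (achieved precisely by the choice $\alpha_j\propto c_j$) delivers the sharpness automatically, so the upper and lower bounds coincide.
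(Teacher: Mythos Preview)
Your proposal is correct and follows essentially the same route as the paper: both arguments decompose the problem into the $n+1$ independent $H^1_0(I_j)$ problems solved by Proposition~\ref{prop1}, and then optimize over the distribution of norm among the intervals. The paper phrases this last step as the finite-dimensional extremal problem $\max\{\sum_j \alpha_j c_j : \alpha_j\ge0,\ \sum_j \alpha_j^2=1\}$ and writes down its solution, whereas you give the equivalent Cauchy--Schwarz upper bound together with the explicit gluing construction for the lower bound; the resulting worst case function and the weights $\alpha_j=c_j/(\sum_k c_k^2)^{1/2}$ are identical.
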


Now we turn to the space $H^1([0,1])$. In this case, we need a small modification for the intervals
$[0,x_1]$ and $[x_n,1]$ since the value of $f(0)$ is unknown if $x_1 > 0$ and $f(1)$ is unknown if $x_n < 1$.

For those functions $f\in H^1([a,b])$ satisfying $f(a)=0$, we take $R(t)=\int_t^b \rho(x) \dx$,\ $t\in[a,b]$.
Then $R(b)=0$ and  the respective maximum is $\norm{R}_2$. We define $f^\ast \in H^1([a,b])$ by
\[
f^*(t) =  \int_a^t \frac{\overline{R(x)}}{\norm{R}_2} \dx.
\]
Then $f^*(a) = 0,\ (f^*)'(b)=0$ and $\norm{f^*}=1$. Afterwards,
\begin{equation}\label{eq:initial-R}
\begin{split}
\int_a^b f^* (x) \rho(x) \dx
&= \, -\int_a^b f^* (x) \, {\rm d} R(x)\\
&= -\, f^* (x) R(x)|_a^b + \int_a^b (f^*)'(x) R(x) \dx
\\
&= \,  \int_a^b (f^*)'(x) R(x) \dx
\\
&= \, \int_a^b \frac{\overline{R(x)}}{\norm{R}_2}  R(x)  \dx
\\
&= \, \norm{R}_2.
\end{split}
\end{equation}

Similarly, for the functions $f\in H^1([a,b])$ satisfying $f(b)=0$, we take $R(t)=\int_a^t
\rho(x) \dx$,\ $t\in[a,b]$. Then $R(a)=0$ and the respective maximum is $\norm{R}_2$. We define $f^\ast$ by
\[
f^*(t) = - \int_t^b \frac{\overline{R(x)}}{\norm{R}_2} \dx.
\]
Then $f^*(b) = 0,\ (f^*)'(a)=0$ and $\norm{f^*}=1$. Also,
$I_\rho(f^*)=  \norm{R}_2$.

Hence, we obtain almost the same assertion for the full space $H^1([0,1])$ as in Theorem~\ref{thm2}.
Here, $c_0=e_0 (0, x_{1}, \rho)=\norm{R}_2$ on $[0,x_1]$ if $x_1>0$ and $c_n=e_0 ( x_{n}, 1, \rho)
=\norm{R}_2$ on $[x_n,1]$  if $x_n<1$, instead of so-called $\norm{R-c}_2$. Accordingly, $f^\ast_0$
and  $f^\ast_n$ should be changed.

Observe that the initial error is infinite if $I(\rho) \not= 0$
since all constant functions have a semi-norm zero. Therefore we now assume that $I(\rho)=0$.
Then for the full space $H^1([0,1])$,
the initial error of the problem $I_\rho$ is, as in \eqref{eq:initial-R},
\begin{equation}             \label{eq:initial-R-b}
e_0  \left( H^1,\rho \right) := \sup_{\Vert f \Vert_{H^1} \le 1 } | I_\rho (f) |
= \Vert R \Vert_2 ,
\end{equation}
where
$R(t)= \int_t^1 \rho(x) \dx$  for $t \in [0,1]$.


%

\begin{rem}\label{re:Lax}
We can apply the theory of ``weak solution of elliptic boundary value problems
in the Sobolev Space $H^1$ and related extremal problems'' in the simplest case,
in particular Lax-Milgram Lemma (see \cite{Cia02,LM54}),
and we obtain the following fact:
The boundary value problem
\[
f^{\prime\prime} = -  \rho,\quad \quad f(a) = f(b)=0,
\]
is equivalent to the extremal problem of finding the minimizer
of the functional
\[
J( f ) = \frac{1}{2}  \norm{f^\prime}_2^2 - \int_a^b f  \rho\, \dx,\,
\]
where $f \in H_0^1([a,b])$.
Hence the minimizer of this extremal problem is the unique solution
of the boundary value problem.
For the space $H^1_0 ([0,1])$ which was considered in Theorem~\ref{thm2} this gives just
another proof of the same result.

If we now consider the full Sobolev space $H^1([0,1])$ then we obtain a slightly different
extremal problem in the first interval
$[a,b]=[0,x_1]$ and  in the last interval $[a,b]=[x_n,1]$.
The extremal problem for the first interval is:
Minimize $J$ as above where now $f$ is from the set of
$H^1 ([a,b])$ with  $f(b)=0$, while $f(a)$ is arbitrary.
It is well known and easy to prove that the respective
boundary value problem is
\[
f^{\prime\prime} = - \rho,\quad \quad f'(a) = 0, f(b)=0 ,
\]
and similar for the last interval.

With these modifications, we obtain a formula for the radius $r(N)$  of the information
for the space $H^1([0,1])$, the same formula as in Theorem~{\ref{thm2}}, only the numbers
$e_0(0,x_1, \rho)$ and
$e_0(x_n, 1, \rho )$ are defined differently, with the modified
extremal problem or modified boundary value problem.
\qed
\end{rem}

\begin{rem}
In the case $\rho_k (x)= \exp(-2 \pi ikx)$, the worst case function is, in each interval
$I_j= [x_j, x_{j+1}]$, of the form
\[
f(x)=c_j \exp(-2 \pi i k x) + a_j x +b_j,
\]
with $f(x_j) =0$ for $j=1, \dots, n$
and
$f'(0)=0$ if $x_1 >0$ and
$f'(1)=0$ if $x_n <1$.
\end{rem}

\section{Oscillatory integrals: optimal nodes}\label{Se:osci}

In this section we consider optimal nodes for integrals with the density function
\[
\rho_k(x) = \exp(-2 \pi i k x), \quad\quad k\in\R\backslash\{0\}, \quad\quad x \in [0,1].
\]
The integrands are from the spaces $H_0^1 ([0,1])$ or $H^1 ([0,1])$, respectively.

\subsection{The case with zero boundary values}   \label{Se:period}

We want to know whether in this case equidistant nodes, i.e.,
\[
x_j = \frac{j}{n+1}, \qquad j= 1, \dots , n,
\]
are optimal for the space
$H^1_0 ([0,1])$  or not. We will see that they are optimal for large $n$, but not for small $n$.

Following Section \ref{Se:arbitrary},
in this case we can consider a general interval $[a,b]$ and compute $R(x)$, constant $c$, and the initial
error $\Vert R-c \Vert_2$. Then we obtain that
the initial error depends only on $k$ and the length $L=b-a$ of the interval,
it is nondecreasing with $L$. We establish that
equidistant $x_j = \frac j{n+1}$ are optimal for large $n$ compared with $|k|$.

According to Remark \ref{re:Ra}, we modify the lower limit of the integral for $R(x)$ and define simply
\begin{equation*}
R(x):= \int_0^x \rho_k(t) {\rm d} t = \int_0^x e^{- 2\pi i k t} {\rm d} t = \frac{e^{- 2\pi i k x}-1}{- 2\pi ik},
\end{equation*}
and
\[
c := \frac 1{b-a} \int_a^b R(x) \dx= - \frac{e^{- 2\pi i k b}-e^{- 2\pi i k a}}{4\pi^2 k^2 L} + \frac1{2\pi ik}.
\]
Then on the interval $[a,b]$,
\begin{equation}
\begin{split}\label{eq:initial-R-c}
&\quad\ \norm{R-c}_2^2\\
&= \, \int_a^b \big( R(x)-c \big) \big( \overline{R(x)}-\overline{c} \big)  \dx \\
&= \, \int_a^b  R(x) \overline{R(x)} \dx  - L\, c\, \overline{c}\\
&= \, \int_a^b  \frac{e^{-2\pi i k x}-1}{-2\pi ik} \frac{e^{2\pi i k x}-1}{2\pi ik} \dx
- L
\left( - \frac{e^{-2\pi i k b}-e^{-2\pi i k a}}{4 \pi^2 k^2 L} + \frac1{2\pi ik}\right)\cdot
\left(- \frac{e^{2\pi i k b}-e^{2\pi i k a}}{4 \pi^2 k^2 L}- \frac1{2\pi ik}\right)
\\
&= \, \frac{1}{4 \pi^2 k^2} \int_a^b 2 \left(  1 - \cos(2 \pi k x) \right)\dx
- \frac{1 - \cos(2 \pi k L)}{8\pi^4 k^4 L} - \frac{- \sin(2 \pi k b) + \sin(2 \pi k a)}{4\pi^3 k^3} - \frac{L}{4 \pi^2 k^2}
\\
&= \, \frac{L}{4 \pi^2 k^2} - \frac{1}{8\pi^4 k^4 L}\left( 1- \cos(2\pi k L)\right),
\end{split}
\end{equation}
which is independent of $a$ and $b$ and stays the same even if $R(x):= \int_a^x e^{- 2\pi i k t} \,  {\rm d} t$.

{}From Proposition \ref{prop1}, we easily obtain the following assertion concerning the initial error.

\begin{prop}   \label{prop2}
Consider the oscillatory integral $I_{\rho_k}: H^1_0 ([0,1]) \to \C$ with $k\in\Z\backslash\{0\}$.
Then the initial error is given by
\[
e_0 =  \sup_{\Vert f \Vert \le 1 } \abs{ I_{\rho_k} (f) } =
\frac 1{2 \pi \abs{k}}.
\]
Moreover the maximum is assumed for $f^* \in H^1_0 ([0,1])$, given by
\[
f^*(t) = \frac 1{2 \pi \abs{k}} \left( e^{2\pi i k t}-1 \right),
\]
i.e., $I_{\rho_k} (f^*) = e_0$
and
$\Vert f^* \Vert=1$ with $f^*(0)=f^*(1)=0$.
\qed
\end{prop}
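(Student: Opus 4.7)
The plan is to apply Proposition \ref{prop1} directly with $\rho = \rho_k$ and $[a,b] = [0,1]$, using the explicit computation of $\|R - c\|_2^2$ already carried out in \eqref{eq:initial-R-c}. The essential observation is that for $k \in \Z \setminus \{0\}$ and $L = 1$ we have $\cos(2\pi k L) = \cos(2\pi k) = 1$, so the second term of \eqref{eq:initial-R-c} vanishes and
\[
\|R - c\|_2^2 = \frac{1}{4\pi^2 k^2},
\]
which by Proposition \ref{prop1} gives $e_0 = \frac{1}{2\pi|k|}$.

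To obtain the worst case function, I would specialize the formula $f^*(t) = -\int_0^t \frac{\overline{R(x)} - \overline{c}}{\|R-c\|_2}\dx$ from Proposition \ref{prop1}. Using $a=0$, $b=1$ and the integer condition $e^{-2\pi i k} = 1$, the constant $c$ collapses to $\frac{1}{2\pi i k}$, and a short algebraic simplification yields
\[
\frac{\overline{R(x)} - \overline{c}}{\|R-c\|_2} = -i\,\sgn(k)\, e^{2\pi i k x}.
\]
Integrating from $0$ to $t$ and combining the factor $\sgn(k)/k = 1/|k|$ produces exactly $f^*(t) = \frac{1}{2\pi|k|}\left(e^{2\pi i k t} - 1\right)$.

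The remaining verifications are short sanity checks: $f^*(0) = 0$ is immediate, and $f^*(1) = 0$ uses again $e^{2\pi i k} = 1$ for integer $k$; the derivative satisfies $|(f^*)'(t)| = 1$ pointwise, so $\|f^*\| = 1$; and evaluating $I_{\rho_k}(f^*)$ directly gives $\frac{1}{2\pi|k|}$ since the term $\int_0^1 e^{-2\pi i k t}\dx$ vanishes.

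There is no real obstacle here: the result is essentially a corollary of Proposition \ref{prop1} combined with the calculation \eqref{eq:initial-R-c}, since the integer condition on $k$ makes the generic length-$L$ expression collapse to its leading term. The only step requiring care is bookkeeping of the sign $\sgn(k)$ and the complex conjugate when extracting $f^*$ from the Proposition \ref{prop1} formula, so that the absolute value $|k|$ appears in the final normalization.
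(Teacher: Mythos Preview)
Your proposal is correct and follows essentially the same approach as the paper: the paper derives Proposition~\ref{prop2} directly from Proposition~\ref{prop1} together with the computation \eqref{eq:initial-R-c}, specializing to $L=1$ and using $\cos(2\pi k)=1$ for integer $k$. Your additional explicit bookkeeping for extracting $f^*$ (tracking $\sgn(k)$ and the conjugate) is exactly the short simplification the paper leaves implicit.
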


Following Theorem~\ref{thm2},
denote $L_j=|I_j|$, then $\sum\limits_{j=0}^n L_j =1$ and $c_j=\norm{R-\beta_j}_2$ with
$\beta_j = \frac 1{L_j} \int_{I_j} R(x) \dx$.
The radius of information is
\[
\left(\sum_{j=0}^n c_j^2\right)^{1/2}=
\frac 1{2 \pi |k|}\left(  1 - \frac{1}{2 \pi^2 k^2}\sum_{j=0}^n\frac{1- \cos(2 \pi k L_j)}{L_j}\right) ^{1/2}
=\frac 1{2 \pi |k|}\left(  1 - \frac{1}{ \pi^2 k^2}\sum_{j=0}^n\frac{\sin^2(\pi k L_j)}{L_j}\right) ^{1/2} .
\]

To make the worst case error as small as possible, we want to find the optimal distribution
of information nodes $(x_j)_{j=1}^n$, in  particular for large $n$. That is,
    \[
    \inf\limits_{L_j\ge0,\atop \sum\limits_{j=0}^n L_j =1}
   \frac 1{2 \pi |k|}\left(  1 - \frac{1}{ \pi^2 k^2}\sum_{j=0}^n\frac{\sin^2(\pi k L_j)}{L_j}\right) ^{1/2}.
   \]

For this, we prove the following lemma.

\begin{lem}\label{lem:2-7}
Let $k\in\Z\backslash\{0\},\ 0=x_0< x_1 < x_2 < \dots < x_n < x_{n+1}=1$ and $L_j= x_{j+1}-x_j,\ j=0,1,\ldots,n$.
Suppose that $n+1 \ge 2.7 |k|$.  Then

\begin{equation}\label{eq:optim1}
    \sup\limits_{L_j\ge0,\atop \sum\limits_{j=0}^n L_j =1}
   \sum_{j=0}^n \frac{ \sin^2(\pi k L_j)}{L_j} = (n+1)^2 \sin^2\left(\frac{\pi k}{n+1}\right),
\end{equation}
i.e., equidistant $x_j$ with $L_j = \frac 1{n+1}$ for all $j=0,1,\ldots,n$ are  optimal.
\end{lem}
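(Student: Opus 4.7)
The plan is to rescale the problem to one variable and reduce it to a single tangent-line upper bound for an auxiliary function, then conclude by a Jensen-style summation. By evenness of $\sin^2$ we may assume $k>0$. Set $\phi(t):=\sin^2 t/t$ with $\phi(0):=0$ and substitute $t_j := \pi k L_j$; the constraints become $t_j\ge 0$ and $\sum_{j=0}^n t_j = \pi k$, the equidistant configuration corresponds to $\bar t := \pi k/(n+1)$, and the hypothesis $n+1\ge 2.7 k$ gives $\bar t \le \pi/2.7 \approx 1.164$. Since $\sin^2(\pi k L_j)/L_j = \pi k\,\phi(t_j)$, the claim \eqref{eq:optim1} is equivalent to
\[
\sum_{j=0}^n \phi(t_j) \;\le\; (n+1)\,\phi(\bar t),
\]
with equality when all $t_j = \bar t$.

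The key step will be the tangent-line inequality
\[
\phi(t) \;\le\; \phi(\bar t) + \phi'(\bar t)(t-\bar t) \qquad\text{for all }t\ge 0.
\]
Once this is in hand, summing over $j$ and using $\sum_j t_j = (n+1)\bar t$ yields the Jensen-style upper bound immediately.

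To prove the tangent inequality I would split $[0,\infty)$ into $[0,t^\ast]$ and $[t^\ast,\infty)$, where $t^\ast$ is the first positive zero of the auxiliary function $A(t) := t^2\cos 2t - t\sin 2t + \sin^2 t$. A quotient-rule computation gives $\phi''(t) = 2A(t)/t^3$; a Taylor expansion yields $A(t) = -t^4 + O(t^6)$ near $0$, and direct evaluation shows $A(\pi/2.7)<0$, so $t^\ast > \pi/2.7$ (numerically $t^\ast\approx 2.08$). Hence $\phi$ is strictly concave on $[0,t^\ast]$, and since $\bar t \le \pi/2.7 < t^\ast$, the tangent bound on $[0,t^\ast]$ follows from the standard tangent characterization of concavity. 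For $t\ge t^\ast$ I would use two further facts: (i) $\bar t$ is strictly below the first positive root $\tau_1\approx 1.1655$ of $\tan t = 2t$, which is the argmax of $\phi$, so $\phi'(\bar t)>0$ and the tangent line $L(t):=\phi(\bar t)+\phi'(\bar t)(t-\bar t)$ is strictly increasing; (ii) $\phi$ is decreasing on $[t^\ast,\pi]$ and $\phi(t)\le 1/t \le 1/\pi < \phi(t^\ast)$ for $t\ge\pi$, so $\phi(t) \le \phi(t^\ast)$ on all of $[t^\ast,\infty)$. Combining, $L(t) \ge L(t^\ast) \ge \phi(t^\ast) \ge \phi(t)$ for $t\ge t^\ast$, finishing the tangent inequality.

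The main obstacle will be the concavity/monotonicity analysis of $\phi$: verifying that the first positive zero of $A$ lies past $\pi/2.7$, and that $\phi$ does not climb back above $\phi(t^\ast)$ through any of the secondary oscillatory maxima of $\sin^2 t/t$. These verifications are exactly what pin down the constant $2.7$ in the hypothesis (the sharp threshold being $\pi/\tau_1\approx 2.696$), and they rest on elementary but careful trigonometric estimates; everything else---the Taylor expansion of $A$, the summation over $j$, and the rescaling---is routine.
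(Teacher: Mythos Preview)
Your approach is correct and takes a somewhat different route from the paper's. Both arguments rest on the same central fact---the concavity of $\phi(t)=\sin^2 t/t$ near the origin, established via the sign of $A(t)=t^2\cos 2t - t\sin 2t + \sin^2 t$ (the paper denotes this $G(t)$ and notes $G'(t)=-2t^2\sin 2t<0$ on $(0,\pi/2)$, so $G<G(0)=0$ there). The difference is in how that concavity is exploited.

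The paper splits into two cases according to whether all $L_j$ lie below the first local maximizer $x_0^\ast$ (equivalently all $t_j\le\tau_1$). If so, it invokes Lagrange multipliers on the concave objective to conclude that equidistance is optimal; if instead some $L_{j_0}>x_0^\ast$, it performs an explicit redistribution---truncating each large length down to $x_0^\ast$ and reallocating the surplus to the small ones---and uses the monotonicity of $\phi$ on $[0,\tau_1]$ to argue that the redistributed configuration has a strictly larger objective, thereby reducing to the first case. Your tangent-line inequality $\phi(t)\le\phi(\bar t)+\phi'(\bar t)(t-\bar t)$ collapses this case split: it covers all $t\ge0$ at once, and summing gives the Jensen bound in one line. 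This is cleaner, though the verification that the tangent dominates on $[t^\ast,\infty)$ (via $\phi(t)\le\phi(t^\ast)\le L(t^\ast)\le L(t)$) requires a slightly more quantitative look at $\phi$ past its first inflection than the paper's redistribution argument needs. Both arguments pin down the same sharp threshold $\pi/\tau_1\approx 2.696$ behind the constant $2.7$.
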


\begin{proof}
Let $f(x)= \sin^2(\pi k x)/x,\ x\in(0,1]$, and $k\in \N$ without loss of generality. Then we have
\[
f^\prime(x)=\frac{1}{x^2} \left(  \pi k x \sin(2\pi k x)
- \sin^2(\pi k x)\right)=\frac{\sin(\pi k x)}{x^2} \left( 2 \pi k x \cos(\pi k x) - \sin(\pi k x)\right).
\]

Solving the equation, $2t \cos(t) = \sin(t)$, i.e., $\tan(t)=2t$,
on the interval $(0,k \pi]$ with $t=\pi k x$, we get $k$ solutions, $t^\ast_0,\ t^\ast_1, \ldots, t^\ast_{k-1}$, with
$j \pi + \pi/3 < t^\ast_j <  j \pi + \pi/2$ for $j=0,1, \ldots, k-1$ and
$t^\ast_{j+1} - (t^\ast_j + \pi)>0$ for $j=0,1, \ldots, k-2$.
This implies that
\[
\sin(2 t^\ast_j)> \sin(2 t^\ast_{j+1})>0,\quad j=0,1, \ldots, k-2.
\]
In particular, $t^\ast_0\approx 0.3710 \pi
< \pi/2$.
A figure of the function $f(x)=\sin^2(\pi k x)/x$ on $(0,1]$
with $k=6$ is drawn by using Matlab, see Figure \ref{fig:1}.


\begin{figure}[htb]
\begin{center}
\scalebox{.68}{\includegraphics[width=19cm]{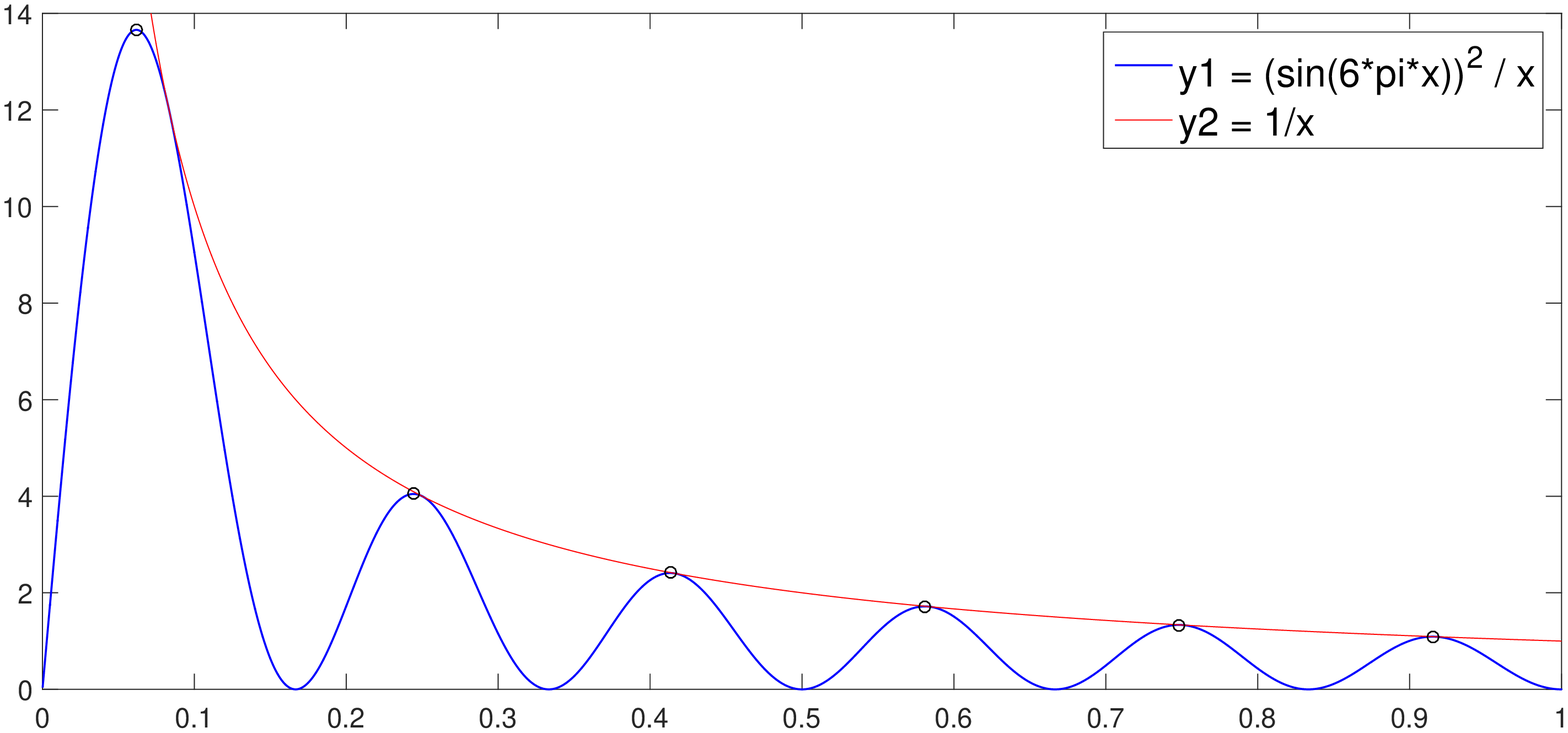}}\\
\caption{
$y_1=\sin^2(\pi k x)/x$ on $(0,1]$
for $k=6$,\; in contrast to $y_2= 1/x$}\label{fig:1}
\end{center}
\end{figure}

The function $f$ is nonnegative and $f(j/k)=0$ for all $j=1, \ldots, k$.
We also put $f(0)=0$ for continuity. In each interval $[j/k,(j+1)/k]$ with $j=0,1, \ldots, k-1$,
the point $x_j^\ast:=t^\ast_j/(k \pi)$ is the maximum point of the function $f$. Since
\[
f(x_j^\ast)=f\left(\frac{t_j^\ast}{k \pi}\right)
= k \pi \frac{\sin^2(t_j^\ast)}{t_j^\ast} = 2 k \pi \frac{\sin^2(t_j^\ast)}{\tan(t_j^\ast)} =  k \pi \sin(2 t_j^\ast),
\]
one knows that $f(x_0^\ast)$ is the maximum value on the whole interval
$[0,1]$. Moreover, the function $f$ is monotone increasing on $[0,x_0^\ast]$ with
 $x_0^\ast\approx 0.3710/k < 1/(2k)$.

Next, for the second derivative of $f$ on the interval $(0,x_0^\ast]$, we have
\[
f^{\prime\prime}(x) = \frac 2{x^3} \left( \sin^2(\pi k x) -  \pi k x \sin(2 \pi k x) +  \pi^2 k^2 x^2\cos(2 \pi k x)\right).
\]
For this, we take $G(t)=\sin^2(t) -  t \sin(2 t) +  t^2\cos(2 t)$ with $t=\pi k x $.
Then $G^\prime(t)= -2 t^2 \sin(2 t)< 0$ and $G(t) < G(0) = 0$ for $t\in (0,\pi/2)$,
which yields that $f^{\prime\prime}(x)< 0 = f^{\prime\prime}(0)$ holds true on the
whole interval $(0,x_0^\ast]$. Indeed, $f^\prime(0) = \pi^2 k^2$, and using L'H$\hat{\rm o}$pital's rule,
\[
f^{\prime\prime}(0) = \lim_{x\rightarrow 0^+} f^{\prime\prime}(x)= 2 (\pi k)^3  \cdot \lim_{t\rightarrow 0^+}
\frac{G^{\prime}(t)}{(t^3)^{\prime}}=0.
\]

Now we turn to the distribution of the nodes $x_j, j=1,\ldots,n$,
 with $n + 1\ge 1/x_0^\ast \approx 2.6954 k$.
We consider two cases depending on whether $L_{j}> x_0^\ast$ holds for some $j\in \{0,1,\ldots, n\}$ or not.
We first assume that  $L_j\le x_0^\ast$ for all $j=0,\ldots,n$.
Thanks to the above properties on the first and second derivatives of $f$, we know that $f$ is concave on $(0,x_0^\ast]$.
Using the Lagrange multiplier method, we obtain that equidistance is the optimal case, i.e., $L_0 = L_1 = \cdots = L_{n}$.
If $L_{j_0}> x_0^\ast$ holds for some $j_0\in \{0,1,\ldots, n\}$, we have $f(L_{j_0})<f(x_0^\ast)$
and easily construct a better distribution, $\{L^{(1)}_j\}_j\subset(0,x_0^\ast]$, in the following steps.

Step 1: Since $L_{j_0}> x_0^\ast$ and $n+1 \ge \frac 1{x_0^\ast} \approx 2.7 k$,
we define $L^{(1)}_{j_0}=x_0^\ast$ simply, which ``saves" $L_{j_0} - x_0^\ast$ (on length) for the summation to compute.

Step 2: For the other $j$ satisfying $L_{j}> x_0^\ast$, we repeat Step 1.

Step 3: Due to $n+1 \ge \frac 1{x_0^\ast}$ and $L_{j_0}> x_0^\ast$, there exists some $j$
satisfying $L_{j}< x_0^\ast$. The ``saving" length of $L^{(1)}_{j}$'s from
Steps 1 and 2 can be given to those $L_j< x_0^\ast$, such that for all $j=0,1\ldots,n$,
$L_j \le L^{(1)}_{j}\le x_0^\ast$ and $\sum\limits_j L^{(1)}_{j} = 1$.

Since the function $f$ is          monotone increasing on $[0,x_0^\ast]$,
the above steps yield that $ f(L^{(1)}_{j}) \ge f(L_j)$ holds true for all  $j=0,1\ldots,n$.
Hence, the proof is finished as required.
\end{proof}



\begin{rem}\label{re:k-R}
In the case $k\in\R\backslash\{0\}$, Lemma \ref{lem:2-7} stays the same.
There are only some small modifications in the proof. Firstly,
for the equation, $2t \cos(t) = \sin(t)$,
on the interval $(0,|k| \pi]$ with $t= |k| \pi x$,
the number of solutions is $\lfloor |k| \rfloor$  or $\lceil |k| \rceil$, depending on $|k|$.
Secondly, the expressions of $f^\prime$ and $f^{\prime\prime}$ remain the same, respectively.
In the other expressions for positive numbers, one can replace $k$ by $|k|$ simply.
In particular, the maximum point of $f$ is $x_0^\ast\approx 0.3710/|k|$
          in $[0,1]$ if $|k|\ge 0.3710$, otherwise $1$.
\qed
\end{rem}

We are now ready to give sharp estimates on the worst case error.

\begin{thm}\label{thm3}
In the case of $H^1_0([0,1])$ with $\rho_k(x) = \exp(- 2 \pi i k x)$ and $k\in\R\backslash\{0\}$,
the radius of information is given by
\[
r(N) = \frac 1{2 \pi |k|}\left(  1 - \frac{1}{ k^2 \pi^2}\sum_j\frac{\sin^2(\pi k L_j)}{L_j}\right) ^{1/2},
\]
where $L_j= x_{j+1}-x_j,\ j=0,1,\ldots,n$, with $0=x_0< x_1 < x_2 < \dots < x_n < x_{n+1}=1$.

Moreover,    if $n \ge 2.7 |k|-1$, then equidistant nodes
($L_j = \frac 1{n+1},\ j= 0, 1,\ldots,n$) are optimal and the worst case error is
\[
e (n, I_{\rho_k}, H^1_0) = \frac1{2\pi |k|} \left( 1 - \frac{(n+1)^2}{k^2\pi^2}
\sin^2\left(\frac{ k \pi}{n+1} \right) \right)^{1/2}.
\]
\qed
\end{thm}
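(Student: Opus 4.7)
The plan is to assemble Theorem~\ref{thm3} from the three ingredients already in place: the general radius formula of Theorem~\ref{thm2}, the explicit integral computation in \eqref{eq:initial-R-c}, and the maximization result of Lemma~\ref{lem:2-7} (together with Remark~\ref{re:k-R} to allow $k\in\R\setminus\{0\}$). No new analytic idea is needed; the task is to chain them together and identify when the equidistant choice is optimal.

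First, I would apply Theorem~\ref{thm2} to the weight $\rho_k$: for any nodes $0=x_0<x_1<\dots<x_n<x_{n+1}=1$ with $L_j=x_{j+1}-x_j$, one has
\[
r(N)^2 \;=\; \sum_{j=0}^n e_0(x_j,x_{j+1},\rho_k)^2.
\]
The inner quantities $e_0(x_j,x_{j+1},\rho_k)^2=\|R-c\|_2^2$ on $[x_j,x_{j+1}]$ were computed in \eqref{eq:initial-R-c} and shown to depend only on $k$ and the length $L_j$, namely
\[
e_0(x_j,x_{j+1},\rho_k)^2 \;=\; \frac{L_j}{4\pi^2 k^2} \;-\; \frac{1-\cos(2\pi k L_j)}{8\pi^4 k^4 L_j}.
\]
Summing over $j$, using $\sum_j L_j = 1$ and the identity $1-\cos(2\pi k L_j)=2\sin^2(\pi k L_j)$, yields
\[
r(N)^2 \;=\; \frac{1}{4\pi^2 k^2}\left(1-\frac{1}{\pi^2 k^2}\sum_{j=0}^n\frac{\sin^2(\pi k L_j)}{L_j}\right),
\]
which is precisely the first formula of the theorem.

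Next, to bound the worst case error $e(n,I_{\rho_k},H^1_0)=\inf_{N} r(N)$ from above and to identify the optimal nodes, observe that minimizing $r(N)^2$ is equivalent to maximizing $\sum_j \sin^2(\pi k L_j)/L_j$ over the simplex $L_j\ge 0$, $\sum L_j=1$. Under the hypothesis $n\ge 2.7|k|-1$, i.e.\ $n+1\ge 2.7|k|$, Lemma~\ref{lem:2-7} together with Remark~\ref{re:k-R} asserts that this maximum equals $(n+1)^2\sin^2(\pi k/(n+1))$, attained at the equidistant choice $L_j=1/(n+1)$. Substituting back produces
\[
e(n,I_{\rho_k},H^1_0) \;=\; \frac{1}{2\pi|k|}\left(1-\frac{(n+1)^2}{\pi^2 k^2}\sin^2\!\left(\frac{\pi k}{n+1}\right)\right)^{1/2},
\]
as claimed.

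The only step that is not bookkeeping is the reduction to Lemma~\ref{lem:2-7}, and here the genuine difficulty — the concavity/monotonicity analysis of $f(x)=\sin^2(\pi k x)/x$ on $(0,x_0^\ast]$ and the swap argument pushing any interval of length $>x_0^\ast$ down to $x_0^\ast$ — has already been carried out there. I expect no further obstacle: the threshold $2.7|k|$ in the theorem is nothing but the rounded value of $1/x_0^\ast\approx 1/0.3710$, inherited verbatim from the hypothesis of the lemma, so the main subtlety of the whole argument is precisely this threshold and is handled upstream.
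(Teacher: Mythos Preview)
Your proposal is correct and follows essentially the same route as the paper: the theorem is stated with a \qed because all the work is done in the text preceding it, namely Theorem~\ref{thm2} plus the computation \eqref{eq:initial-R-c} to get the radius formula, then Lemma~\ref{lem:2-7} (extended via Remark~\ref{re:k-R}) for the optimization over the $L_j$. You have identified and chained these ingredients exactly as the paper does.
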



Furthermore, we establish a few nice asymptotic properties for the $n$th minimal errors as follows.

\begin{cor}\label{cor: asym-2sq3}
Under the same assumption of Theorem~\ref{thm3}, the following statements
hold:

\begin{enumerate}[(i)]
\item
For fixed $k\in \R\backslash\{0\}$ and (optimal) equidistant nodes, we have
$$
\lim_{n \to \infty}  e (n, I_{\rho_k}, H^1_0) \cdot n =
\frac{1}{2 \sqrt 3} .
$$

\item For fixed $n\in \N$ and arbitrary nodes, we have
$$
\lim_{|k| \to \infty}  e (n, I_{\rho_k}, H^1_0) \cdot |k| =
\frac{1}{2 \pi} .
$$
\item Suppose in addition that $k\in \Z\backslash\{0\}$. Then
for fixed $n\in \N$ and arbitrary nodes,
$$
\lim_{|k| \to \infty}  \frac {e (n, I_{\rho_k}, H^1_0)}
{e (0, I_{\rho_k}, H^1_0)} =1.
$$
\end{enumerate}
\qed
\end{cor}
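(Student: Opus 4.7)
The plan is to use the explicit formula for the radius of information from Theorem~\ref{thm3} (together with the general formula valid for arbitrary nodes in the same statement) and analyze each limit separately. In each case the main tool will be the formula
\[
r(N) = \frac{1}{2\pi |k|}\left(1 - \frac{1}{k^2\pi^2}\sum_{j=0}^n \frac{\sin^2(\pi k L_j)}{L_j}\right)^{1/2},
\]
with $L_j = x_{j+1}-x_j$, combined with the trivial upper bound $e(n, I_{\rho_k}, H^1_0) \le e(0, I_{\rho_k}, H^1_0)$.

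For (i), I would apply Theorem~\ref{thm3} directly: for fixed $k$ and $n$ large enough that $n \ge 2.7|k|-1$, equidistant nodes are optimal and
\[
e(n, I_{\rho_k}, H^1_0) = \frac{1}{2\pi|k|}\left(1 - \frac{(n+1)^2}{k^2 \pi^2}\sin^2\!\left(\frac{k\pi}{n+1}\right)\right)^{1/2}.
\]
Since $k\pi/(n+1) \to 0$, I would Taylor-expand $\sin^2(x) = x^2 - x^4/3 + O(x^6)$ to obtain
\[
1 - \frac{(n+1)^2}{k^2\pi^2}\sin^2\!\left(\frac{k\pi}{n+1}\right) = \frac{1}{3}\left(\frac{k\pi}{n+1}\right)^2 + O\!\left(\frac{1}{n^4}\right),
\]
and plugging this in gives $e(n, I_{\rho_k}, H^1_0) \sim \frac{1}{2\sqrt{3}\,(n+1)}$, which yields the claimed limit.

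For (ii), I need a two-sided estimate for arbitrary nodes and fixed $n$ as $|k|\to\infty$. The upper bound is immediate: $e(n, I_{\rho_k}, H^1_0) \le e(0, I_{\rho_k}, H^1_0)$, and from (\ref{eq:initial-R-c}) specialized to $[a,b]=[0,1]$ (so $L=1$), the initial error satisfies $e(0, I_{\rho_k}, H^1_0)\cdot |k| \to \frac{1}{2\pi}$ since the term $(1-\cos 2\pi k)/(8\pi^4 k^4)$ is $O(1/k^4)$. For the lower bound I would use the elementary inequality
\[
\frac{\sin^2(\pi k L)}{L} = \pi|k|\cdot \frac{\sin(\pi k L)}{\pi k L}\cdot \sin(\pi k L) \le \pi|k|,
\]
which yields $\sum_{j=0}^n \sin^2(\pi k L_j)/L_j \le (n+1)\pi |k|$ for any admissible $L_j$, and hence
\[
r(N) \ge \frac{1}{2\pi|k|}\left(1 - \frac{n+1}{|k|\pi}\right)^{1/2}
\]
for $|k|$ large. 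Taking the infimum over nodes and multiplying by $|k|$ shows $\liminf_{|k|\to\infty} e(n, I_{\rho_k}, H^1_0)\cdot |k| \ge \frac{1}{2\pi}$, matching the upper bound.

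Finally, (iii) follows immediately from (ii) together with Proposition~\ref{prop2}: for $k\in\Z\setminus\{0\}$ one has $e(0, I_{\rho_k}, H^1_0) = \frac{1}{2\pi|k|}$, so $e(n, I_{\rho_k}, H^1_0)/e(0, I_{\rho_k}, H^1_0) = 2\pi|k|\cdot e(n, I_{\rho_k}, H^1_0) \to 2\pi\cdot \frac{1}{2\pi} = 1$. The main obstacle will be (ii), since here equidistant nodes are no longer known to be optimal when $|k|$ is large relative to $n$, so one cannot just use the closed-form expression as in (i) and instead must argue by the pointwise bound $\sin^2(\pi k L)/L \le \pi|k|$ to control the sum uniformly in the choice of nodes.
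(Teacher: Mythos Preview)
Your proof is correct and matches the paper's approach: Taylor expansion for (i), the $r(N)$ formula from Theorem~\ref{thm3} together with the uniform bound $\sin^2(\pi |k| L)/L \le \pi|k|$ for (ii), and then (iii) from (ii) and Proposition~\ref{prop2}. The paper's own proof is a one-liner pointing to exactly these ingredients, so your write-up is simply a more detailed version of the same argument (with the minor caveat that your factorization in (ii) should use $|k|$ throughout, since as written it acquires a spurious sign when $k<0$).
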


\begin{proof}
Point (i) can be proved via Taylor's expansion in the same manner as in Theorem~\ref{thm6}. Point (ii)  is known from the result for the radius of information in Theorem~\ref{thm3}. This implies point (iii) by Proposition~\ref{prop2}.

\end{proof}
\begin{rem}\label{re:small-n}
What is the optimal distribution of information nodes if $n+1< 2.6954 |k|$
for the supreme term on the left side of  \eqref{eq:optim1}?
In the case of $n=|k|-1$ equidistant nodes are the worst nodes.
Observe that in this case these $n$ function values are useless:
the radius of information is the same as the initial error of the problem.
Further, 
the radius of information on equidistant nodes is oscillatory (no more than the initial error)
as $n$ increases from $1$ to $\lfloor |k| \rfloor -1$ if $|k|\ge 3$, and it is monotone decreasing (with the asymptotic constant $\frac{1}{2 \sqrt 3}$ mentioned above) as $n$ increases from $\max(1,\lceil |k| \rceil-1)$ to infinity.

Even, in the cases $n+1=2|k|, 2.5 |k|, 2.6 |k|$  we can show that
equidistant nodes are not always optimal by Matlab experiments,
see Table \ref{tab:example1}.
We compare the worst case errors
$\hat e^{\rm equi}_n :=  e^{\rm equi} (n, I_{\rho_k}, H^1_0)$
(for nodes $x_j=\frac{j}{n+1}$)
 with $\hat e^{\rm opt}_n := e^{\rm opt} (n, I_{\rho_k}, H^1_0)$ (for optimal nodes)
and compute the so-called relative errors, $\hat d^{\rm equi}_n := (\hat e^{\rm equi}_n - \hat e^{\rm opt}_n)/ \hat e^{\rm opt}_n$.

\vspace{3mm}
\begin{table}[!htbp]
\footnotesize
\caption{Counterexamples for equidistance by Matlab}\label{tab:example1}
\centering
\renewcommand\arraystretch{2}
\begin{tabular}{rrrrrrr}
\hline
\ $\frac{n+1}k$ & \quad\quad$k$ & \quad$n+1$ & $\hat e^{\rm equi}_n$ & $\hat e^{\rm opt}_n$  &
$\hat e^{\rm equi}_n - \hat e^{\rm opt}_n$ & $\hat d^{\rm equi}_n$
\\
\hline
 2 & 72 & 144 &  $1.68133 \cdot 10^{-3}$ & $1.60478 \cdot 10^{-3}$ &
 $7.66 \cdot 10^{-5}$ & +4.8\% \\



2.5 & 194 &  485 & $5.36217 \cdot 10^{-4}$ & $5.34544 \cdot 10^{-4}$ &
$1.67 \cdot 10^{-6}$ & +0.31\% \\


2.6 & 290 & 754 &~\quad $3.47616  \cdot 10^{-4}$ & ~\quad $3.47567
\cdot 10^{-4}$ &\quad\quad $4.90  \cdot 10^{-8}$ & ~\quad +0.014\% \\
\hline
\end{tabular}
\end{table}

Related to the field of digital signal processing, a famous assertion,
the Nyquist Sampling Theorem states that, see \cite{Lan67,ME09}:
If a time-varying signal is periodically sampled at a rate of
at least \emph{twice} the frequency of the highest-frequency sinusoidal component contained
within the signal, then the original time-varying signal can be exactly recovered from the periodic samples.
It seeks in essence for the reconstruction of continuous periodic functions.
In contrast, for oscillatory integrals of periodic
functions from $H^1$, the multiple number $2.7$ assures that equidistant nodes achieve the optimal quadrature.
\qed
\end{rem}



\subsection{The general case}   \label{Se:non-period}

%

We want to find optimal nodes,
\[
0 \le  x_1 < \cdots < x_n \le 1,
\]
for the oscillatory integrals and integrands from the full space
$H^1 ([0,1])$ with $k\in \R\backslash\{0\}$.
We will prove some nice formulas for
large $n$, but not for small $n$. For convenience we take $x_0 = 0,\ I_j = [x_j,x_{j+1}]$ and $L_j=|I_j|$.

To compute the number $r(N)$ as in Theorem~\ref{thm2} with arbitrary nodes mentioned above, firstly  we consider the initial errors for all intervals
               under the assumption $N(x_1, \dots, x_n)= 0$.

On the intervals $I_j,\ j=1,\ldots,n-1$, we know from \eqref{eq:initial-R-c} that
the initial error is $\norm{R-c}_{2,j}$ with
\[
\norm{R-c}_{2,j}^2 :=\int_{x_j}^{x_{j+1}} \big( R(x)-c \big)
\big( \overline{R(x)}-\overline{c} \big)  \dx=\frac{L_j}{4 \pi^2 k^2} - \frac{1}{8\pi^4 k^4 L_j}\left( 1- \cos(2\pi k L_j)\right).
\]

On the interval $I_0=[0, x_1]$, we obtain from  \eqref{eq:initial-R} that
the initial error is $\norm{R_0}_{2,0}$ with
$R_0(t)=\int_0^t \rho_k(x) \dx$,\ $t\in[0, x_1]$,
and for $k\in \R\backslash\{0\}$,

 \[
\norm{R_0}_{2,0}^2 :=\int_{0}^{x_1} R_0(x) \cdot \overline{R_0(x)}
\dx=\frac{1}{4 \pi^2 k^2} \left( 2 L_0 -  \frac{\sin(2 \pi k L_0)}{\pi k}\right).
\]

Similarly on the interval $I_n=[x_n, 1]$, the initial error is
$\norm{R_n}_{2,n}$ with $R_n(t)=\int_t^1 \rho_k(x) \dx$,\ $t\in[x_n,1]$, and for $k\in\R\backslash\{0\}$,
\[
\norm{R_n}_{2,n}^2 :=\int_{x_n}^{1} R_n(x) \cdot \overline{R_n(x)}  \dx
=\frac{1}{4 \pi^2 k^2} \left( 2 L_n -  \frac{\sin(2 \pi k L_n)}{\pi k}\right).
\]

As usual, the initial error is given by taking the zero algorithm $A_0(f)=0$.
If $k\in\Z\backslash\{0\}$, we have  $I(\rho_k)=0$, and
by \eqref{eq:initial-R-b},
\[
e(0, I_{\rho_k}, H^1) =
\sup_{f\in H^1:\, \|f\|\le1}\, |I_{\rho_k} (f)| = \sup_{f\in H^1:\, \|f\| \le 1}\, |I_{\rho_k} (f-f(0))|
= \frac{\sqrt{2}} {2 \pi |k|}.
\]
Following the same lines as in Section \ref{Se:period}, the radius of information is,
\[
\begin{split}
&\left(\sum_{j=1}^{n-1} \norm{R-c}_{2,j}^2 + \norm{R_0}_{2,0}^2 + \norm{R_n}_{2,n}^2
\right)^{1/2}\\
&\quad\quad\quad\quad=
\frac 1{2 \pi |k|}\left(  L_0 -  \frac{\sin(2 \pi k L_0)}{\pi k} + L_n -  \frac{\sin(2 \pi k L_n)}{\pi k} + 1
- \frac{1}{ \pi^2 k^2}\sum_{j=1}^{n-1}\frac{\sin^2(\pi k L_j)}{L_j}\right) ^{1/2} .
\end{split}
\]
Suppose in addition that
$n-1 \ge 2.7 |k|$. Following Lemma \ref{lem:2-7},
we obtain that for any fixed nodes $ x_1, x_n \in [0,1]$, equidistant $x_j$ with $L_{j-1} =
\frac {x_n-x_1}{n-1}$ for all $j= 2,\ldots,n-1$ are  optimal. Afterwards, we have to find the optimal nodes, $x_1=:x$ and $x_n=: 1-z$, for
\[
\inf\limits_{x,y,z\ge0,\atop x + (n-1)y +z =1}
\frac 1{2 \pi |k|}\left(  x -  \frac{\sin(2 \pi k x)}{\pi k} +  z -
\frac{\sin(2 \pi k z)}{\pi k} + 1 - \frac{(n-1)}{ \pi^2 k^2}\frac{\sin^2(\pi k y)}{y}\right) ^{1/2}.
\]
For this, we prove the following lemma.

\begin{lem}\label{lem:nonT-27-nonBVP}
Let  $k\in \R \backslash\{0\}$ and  $n-1 \ge 2.7 |k|$.  Then for
\begin{equation}\label{eq:nonT-27-nonBVP}
    \inf\limits_{x,y,z\ge0,\atop x + (n-1)y +z =1}
\left(  x -  \frac{\sin(2 \pi k x)}{\pi k} +  z -
\frac{\sin(2 \pi k z)}{\pi k} + 1 - \frac{(n-1)}{ \pi^2 k^2}\frac{\sin^2(\pi k y)}{y}\right) ^{1/2},
\end{equation}
the unique solution of the minimum point $(x,z)=(x^\ast, z^\ast)$ satisfies
that $x^\ast= z^\ast$ and $x^\ast$ is the stationary point of the function,
\[
S(x)= 2x -  \frac{2\sin(2 \pi k x)}{\pi k}  - \frac{(n-1)^2}{ \pi^2 k^2}
\frac{\sin^2\left(\pi k \cdot \frac{1-2x}{n-1}\right)}{1-2x},
\]
in the interval $\left(0,\  \min\left(\frac12,\frac1{6|k|}\right)\right)$ and dependent of $|k|$ and $n$.
\qed
\end{lem}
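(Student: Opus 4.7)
The plan is to reduce the three-variable constrained problem in stages to a one-variable analysis of $S$. Without loss of generality assume $k > 0$, since the lemma statement depends on $k$ only through $|k|$ and the objective is invariant under $k \mapsto -k$. Since $\sqrt{\cdot}$ is monotone on $[0,\infty)$, it is equivalent to minimize the radicand
\[
F(x,y,z) := x - \frac{\sin(2\pi k x)}{\pi k} + z - \frac{\sin(2\pi k z)}{\pi k} + 1 - \frac{n-1}{\pi^2 k^2}\,\frac{\sin^2(\pi k y)}{y}.
\]
Eliminate $y$ by $y = (1-x-z)/(n-1)$, obtaining $\widetilde F(x,z)$ on the triangle $\{x,z \ge 0,\ x+z \le 1\}$; observe that $\widetilde F$ is symmetric under $x \leftrightarrow z$.

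To force $x^* = z^*$ at any interior minimizer, fix $s := x+z$; the $y$-term is then frozen, and it remains only to minimize
\[
\varphi_s(x) := \bigl(x - \tfrac{\sin(2\pi k x)}{\pi k}\bigr) + \bigl((s-x) - \tfrac{\sin(2\pi k(s-x))}{\pi k}\bigr).
\]
The sum-to-product identity gives $\varphi_s'(x) = 4\sin(\pi k s)\,\sin(\pi k(2x-s))$, which, for $0 < s < 1/k$, vanishes on $(0,s)$ only at $x = s/2$, and there $\varphi_s''(s/2) = 8\pi k\sin(\pi k s) > 0$. Hence $x = z = s/2$ is the unique minimum of $\varphi_s$ for such $s$. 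Substituting $x = z$ and $y = (1-2x)/(n-1)$ into $\widetilde F$ yields $\widetilde F(x,x) = 1 + S(x)$ with $S$ as in the lemma.

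It remains to show that $S$ has a unique stationary point in $I := (0,\min(\tfrac12,\tfrac{1}{6k}))$. Write $S'(x) = (2-4\cos(2\pi k x)) + T'(x)$, where $T$ is the middle term. On $I$ we have $\cos(2\pi k x) > 1/2$, so the first piece is strictly negative; at $x = 0^+$ a Taylor expansion of $T$ exploiting $n-1 \ge 2.7 k$ shows $T'(0^+)$ is dominated in magnitude by that first piece, hence $S'(0^+) < 0$. As $x \uparrow 1/2$ the factor $1/(1-2x)$ in $T$ forces $T'(x) \to +\infty$, so $S'$ eventually becomes positive. A sign analysis of $S'$, verifying that the derivative of $T'$ stays large enough to dominate the oscillation of $2-4\cos(2\pi k x)$ under the hypothesis $n - 1 \ge 2.7 k$, gives exactly one zero, lying in $I$.

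The main obstacle is closing the loop of the symmetrization argument: the positivity $\varphi_s''(s/2) > 0$ only holds for $s < 1/k$, whereas a priori $s \in [0,1]$. I would handle this by first locating the stationary point $x^*$ of $S$ and verifying $x^* < 1/(6k)$, so that $s^* = 2x^* < 1/(3k) < 1/k$ and the reduction is consistent on the diagonal. To rule out candidate minimizers with $s \ge 1/k$, I would compare objective values: for such $s$ the middle-interval contribution $-(n-1)^2 \sin^2(\pi k(1-s)/(n-1))/(\pi^2 k^2 (1-s))$ loses by an amount that the bounded oscillatory boundary terms cannot recover under $n-1 \ge 2.7 k$. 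Since $x^*$ is determined by the implicit equation $S'(x) = 0$, the dependence on $k$ and $n$ is transcendental, which is why the lemma characterizes $x^*$ only as a stationary point.
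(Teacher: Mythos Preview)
Your overall strategy---reduce to the diagonal $x=z$ and then study the one-variable function $S$---matches the paper's, and your sum-to-product computation of $\varphi_s'$ is correct and clean. But two concrete steps fail.

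First, the claim that $T'(x)\to+\infty$ as $x\uparrow 1/2$ is wrong. Writing $y=(1-2x)/(n-1)$ and $f(y)=\sin^2(\pi k y)/y$, one has $T(x)=-\tfrac{n-1}{\pi^2k^2}f(y)$ and hence $T'(x)=\tfrac{2}{\pi^2k^2}f'(y)$; since $f'(0)=\pi^2k^2$, you get $T'(1/2^-)=2$, finite. The correct mechanism for a unique zero of $S'$ on $I=(0,\min(\tfrac12,\tfrac{1}{6k}))$ is \emph{monotonicity}: on $I$ the piece $g'(x)=2-4\cos(2\pi k x)$ is strictly increasing (from $-2$), and $T'(x)=\tfrac{2}{\pi^2k^2}f'(y)$ is also increasing in $x$ because $f''<0$ on $(0,x_0^\ast]$ and $y$ decreases with $x$. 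So $S'=g'+T'$ is strictly increasing; the endpoint values $S'(0)=-2+2\delta<0$ (with $\delta=\tfrac{1}{\pi^2k^2}f'(\tfrac{1}{n-1})\in(0,1)$) and $S'>0$ at the right endpoint give exactly one zero. Your ``sign analysis dominating the oscillation'' is not the right picture: there is no oscillation of $g'$ on $I$ yet.

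Second, the circularity you flag in the symmetrization (needing $s<1/k$) is real, and your proposed fix by ``comparing objective values'' for $s\ge 1/k$ is not carried out. The paper avoids the loop altogether by a different order of operations: it first fixes $z$ and minimizes over $x$ (using only the concavity and monotonicity of $f$ on $(0,x_0^\ast]$ established in Lemma~\ref{lem:2-7}), showing any partial minimizer already satisfies $\tilde x\in(0,1/(6k))$; by symmetry the same holds for $z$. Only then, knowing a priori that $x+z<1/(3k)$, does it invoke the convexity of $t\mapsto t-\sin(2\pi k t)/(\pi k)$ on $(0,1/(3k))$ to force $x=z$. This removes the need for any ad hoc comparison with large-$s$ configurations.
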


\begin{proof}
Without loss of generality, we assume that $k\in \R^+$ and $z\le 1/2$
since $x+(n-1)y+z=1$. Let us begin with the steps below.

Step 1: For any fixed $z=z_0$, we discuss by $x+(n-1)y=1-z_0\ge 1/2$.
We prove now that the unique solution
of the minimum point for $x=\wt x$ (depending on $k,n$ and $z_0$) should appear in $(0,1/(6k))$.

That is to consider
\[
    \inf\limits_{x,y\ge0,\atop x + (n-1)y =1-z_0}
   \left( x -  \frac{\sin(2 \pi k x)}{\pi k}  - \frac{(n-1)}{ \pi^2 k^2}\frac{\sin^2(\pi k y)}{y}\right).
   \]
We take $f(y)= \sin^2(\pi k y)/y,\ y\in(0,1],\ f(0)=0$ as in Lemma~\ref{lem:2-7},
\[
F_1(x) = \frac{(n-1)}{ \pi^2 k^2} f(y) \quad \text{with} \quad y=\frac{1-z_0-x}{n-1}
\in \left[0,\frac{1-z_0}{n-1}\right],
 \]
and
\[
g_1(x)=x - \frac{\sin(2 \pi k x)}{\pi k},\ \ x\in[0,1-z_0].
\]

Define $S_1(x):= g_1(x)- F_1(x),\ x\in[0,1-z_0]$.  Under the assumption of $n-1 \ge 2.7 |k|$, we have $0 < \frac{1-z_0}{n-1} \le \frac{1}{n-1}
< x_0^\ast \approx 0.3710/k$. Afterwards, for $y\in[0,\frac{1-z_0}{n-1}],\ f^\prime(y)>0$ and
\[
F_1^\prime(x) = \frac{(n-1)}{ \pi^2 k^2}\cdot y^\prime_x\cdot f^\prime(y)
= -\frac{1}{ \pi^2 k^2} f^\prime(y)<0,\ x\in\left[0,1-z_0\right].
\]
This helps us to decompose the function $S_1(x)$ into two parts, $x-F_1(x)$ and $-\sin(2 \pi k x)/(\pi k)$.
One part, $x-F_1(x)$, is monotone increasing on $[0,1-z_0]$. The other, $-\sin(2 \pi k x)/(\pi k)$, is $1/k$ -periodic.
This implies that the minimum point $x=\wt x$ for $S_1(x)$ appears in $[0,1/(4k)]$, more precisely  $(0,1/(6k))$, see the details below.

Indeed, one can assume first that $1/(6k)\le 1-z_0$. Then $g_1^\prime(x)=1-2\cos(2\pi k x)$ is increasing from $-1$ to $0$ on the interval $[0,1/(6k)]$ and
positive on $(1/(6k),1/(4k)]$.  Since $f^{\prime\prime}$ is negative on $(0,x_0^\ast]$ and $f^{\prime}(x_0^\ast)=0$, we obtain that
\[
F_1^{\prime\prime}(x) = -\frac{1}{ \pi^2 k^2} \cdot y^\prime_x\cdot f^{\prime\prime}(y)=
\frac{1}{ \pi^2 k^2}\cdot \frac 1{n-1} \cdot f^{\prime\prime}(y) < 0,\quad x\in[0,1-z_0],
\]
and for some $\delta_1 \in (0,1)$,
\[
F_1^\prime(0) =  -\frac{1}{ \pi^2 k^2} f^\prime\left(\frac {1-z_0}{n-1} \right)=:
-\delta_1  >  F_1^\prime\left(\frac1{6k}\right) \ge
F_1^\prime(1-z_0) =  -\frac{1}{ \pi^2 k^2} f^\prime(0)=-1.
\]
By the intermediate value theorem, there is one point $\wt x\in (0,1/(6k))$ such that $S_1^\prime(\wt x)
=g_1^\prime(\wt x)-F_1^\prime(\wt x)=0$. Moreover, the monotonicity of $g_1^\prime - F_1^\prime$ assures the uniqueness of $\wt x$.

In the case $1/(6k)> 1-z_0$, we have that $g_1^\prime(x)=1-2\cos(2\pi k x)$ is increasing from $-1$ to $-\delta_2$ on the interval $[0,1-z_0]$ where  $\delta_2 \in (0,1)$.  Similarly,
 for some $\delta_1 \in (0,1)$,
\[
F_1^\prime(0) =  -\frac{1}{ \pi^2 k^2} f^\prime\left(\frac {1-z_0}{n-1} \right)=:
-\delta_1  >
F_1^\prime(1-z_0) =  -\frac{1}{ \pi^2 k^2} f^\prime(0)=-1.
\]
Then there is one point $\wt x \in (0,1-z_0) \subset (0,1/(6k))$ such that $S_1^\prime(\wt x)
=g_1^\prime(\wt x)-F_1^\prime(\wt x)=0$, and the monotonicity of $g_1^\prime - F_1^\prime$ assures the uniqueness of $\wt x
\in \left(0,\min\left(1-z_0,\frac1{6k}\right)\right)$.

Step 2: Iterate the above process by fixing $\wt x$. From $z+(n-1)y=1-\wt x> 1-\min\big(1-z_0,\frac1{6k}\big) \ge z_0$,
we obtain a new minimum point for $z=\wt z \in \left(0,\min\left(1-\wt x,\frac1{6k}\right)\right)$.

Step 3: Iterate the process by fixing $y$ above.
One knows easily
$x+z=1-(n-1)y<1/(3k)$ and considers
$$g_1(x)=x - \frac{\sin(2 \pi k x)}{\pi k}, \quad x\in\left[0,\frac1{3k}\right].$$
Then
$$g_1^{\prime\prime}(x)= 4 \pi k \sin(2 \pi k x)>0,\quad x\in\left(0,\frac1{3k}\right].$$
This implies, by Lagrange multiplier method, that $x=z=p/2$ is the unique solution of the extremal problem,
\[
    \inf\limits_{x,z\ge0,\atop x + z = p }
\left(  x -  \frac{\sin(2 \pi k x)}{\pi k} +  z -
\frac{\sin(2 \pi k z)}{\pi k} \right) \quad \quad
\text{for any fixed}\quad   p \in \left[0, \frac 1{3k}\right].
\]

The above three steps shift the extremal problem
\eqref{eq:nonT-27-nonBVP} to the simpler case below,
\[
\inf\limits_{x,y\ge0,\atop 2x + (n-1)y =1}
\left( 2x -  \frac{2\sin(2 \pi k x)}{\pi k}  - \frac{(n-1)}{ \pi^2 k^2}\frac{\sin^2(\pi k y)}{y}\right).
\]

We follow Step 1 with a few small modifications mainly on domains. Here we take
\[
F(x) = \frac{(n-1)}{ \pi^2 k^2} f(y) \quad \text{with} \quad y=\frac{1-2x}{n-1}
\in \left[0,\frac{1}{n-1}\right],
 \]
\[
g(x)=2 g_1(x)=2x - 2\frac{\sin(2 \pi k x)}{\pi k},\ \ x\in\left[0,\frac12\right],
\]
and $S(x):= g(x)- F(x),\ x\in[0,\frac 12]$.
Since $n-1 \ge 2.7 |k|$, we have $y\le \frac 1{n-1}\le \frac 1{2.7|k|}$ and
$F^\prime(x) <0,\ x\in\left[0,\frac12\right]$.

This helps us to decompose the function $S(x)$ into two parts again, and the minimum point for $S(x)$ appears in  $(0,1/(6k))$.

To be specific, $g^\prime(x)=2-4\cos(2\pi k x)$ is increasing from $-2$ to $0$ on
 $[0,1/(6k)]$. Since $f^{\prime\prime}$ is negative on $(0,x_0^\ast]$ and $f^{\prime}(x_0^\ast)=0$, again we have
\[
F^{\prime\prime}(x) = -\frac{2}{ \pi^2 k^2} \cdot y^\prime_x\cdot f^{\prime\prime}(y)=
\frac{4}{ \pi^2 k^2}\cdot \frac {1}{(n-1)} \cdot f^{\prime\prime}(y) < 0,\ x\in
\left[0,\frac12\right).
\]
If $1/(6k)\le 1/2$, then for some $\delta \in (0,1)$,
\[
F^\prime(0) =  -\frac{2}{ \pi^2 k^2} f^\prime\left(\frac 1{n-1} \right)=:
- 2 \delta  >  F^\prime\left(\frac1{6k}\right) \ge
F^\prime\left( \frac12 \right) =  -\frac{2}{ \pi^2 k^2} f^\prime(0)=-2.
\]
Otherwise, for $1/(6k) > 1/2$, $g^\prime$ is increasing from $-2$ to $-2 \delta^\prime$ on $[0,1/2]$, and $F^\prime(0) =:
- 2 \delta  >
F^\prime\left(1/2 \right) = -2$, where $\delta,\delta^\prime\in(0,1)$.

Therefore, there is only one point $x^\ast\in \left(0,\min\big(\frac12,\frac1{6k}\big)\right)$ such that $S^\prime(x^\ast)=g^\prime(x^\ast)-F^\prime(x^\ast)=0$. This gives the unique solution for \eqref{eq:nonT-27-nonBVP}.

In the case of $k\in\R\backslash\{0\}$, we use $|k|$ instead of $k$ and  mention that
\[
g(x)=x - \frac{\sin(2 \pi k x)}{\pi k} = x - \frac{\sin(2 \pi |k| x)}{\pi |k|}.
\]
Hence the proof is finished.
\end{proof}

This enables us to give sharp estimates on the worst case error for the full space $H^1([0,1])$.

\begin{thm}\label{thm5}

In the case of $H^1([0,1])$ with  $k\in\R\backslash\{0\}$,
the radius of information is given by
\[
r(N) = \frac 1{2 \pi |k|}\left( L_0 -  \frac{\sin(2 \pi k L_0)}{\pi k} + L_n -
\frac{\sin(2 \pi k L_n)}{\pi k} + 1 - \frac{1}{ \pi^2 k^2}\sum_{j=1}^{n-1}\frac{\sin^2(\pi k L_j)}{L_j}\right) ^{1/2},
\]
where $L_0=x_1,\ L_j= x_{j+1}-x_j,\ j=1,\ldots,n-1$ and $L_n= 1 - x_n$, with $0\le x_1 < \dots < x_n \le 1$.

Moreover, if $n-1 \ge 2.7 |k|$, then $x_1=1-x_n=x^\ast$ with $x^\ast$ from
Lemma \ref{lem:nonT-27-nonBVP},
and equidistant $ x_j = \frac{j-1}{n-1} \cdot (x_n-x_1) +
x_1,\ j=2,\ldots,n-1$,
are optimal in the worst case.
\qed
\end{thm}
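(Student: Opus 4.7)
The plan is to first establish the closed-form expression for $r(N)$ and then reduce the optimality statement to a two-stage application of Lemmas~\ref{lem:2-7} and~\ref{lem:nonT-27-nonBVP}.

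\textbf{Step 1: Formula for $r(N)$.} I would mimic the proof of Theorem~\ref{thm2}, adapted to the full space $H^1$ via Remark~\ref{re:Lax}. On each interior interval $I_j=[x_j,x_{j+1}]$ with $1\le j\le n-1$, the zero-boundary worst-case function gives the per-interval initial error $\norm{R-c}_{2,j}$ as in \eqref{eq:initial-R-c}. On the two boundary intervals $I_0=[0,x_1]$ and $I_n=[x_n,1]$, the Neumann-type modification described in Remark~\ref{re:Lax} yields the initial errors $\norm{R_0}_{2,0}$ and $\norm{R_n}_{2,n}$ computed just before the theorem. The same Lagrange-multiplier combination used in Theorem~\ref{thm2} shows that $r(N)^2$ is the sum of the squares of these per-interval initial errors, the joint worst-case function being a weighted patch of the individual extremizers. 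Substituting the closed-form expressions produces the asserted formula for $r(N)$.

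\textbf{Step 2: Interior equidistance.} For the optimality claim I would decouple the minimization into interior and boundary variables. Fix $L_0$ and $L_n$ and write $s:=1-L_0-L_n$. Only the sum $\sum_{j=1}^{n-1}\sin^2(\pi k L_j)/L_j$ depends on $L_1,\dots,L_{n-1}$, and it enters $r(N)^2$ with a negative coefficient, so minimizing $r(N)$ amounts to \emph{maximizing} this sum subject to $\sum_{j=1}^{n-1} L_j=s$. After the affine rescaling $L_j=s u_j$, this is precisely the extremal problem of Lemma~\ref{lem:2-7} with $n-1$ points and frequency $ks$; since $s\le 1$, the hypothesis $n-1\ge 2.7|k|\ge 2.7|ks|$ holds, and the lemma yields the unique interior maximizer $L_j=s/(n-1)$, i.e.\ equidistance on $[x_1,x_n]$.

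\textbf{Step 3: Boundary balance.} Substituting the equidistant interior $L_j=(1-L_0-L_n)/(n-1)$ back into $r(N)^2$ reduces the remaining minimization in $(L_0,L_n)$ to the two-variable extremal problem \eqref{eq:nonT-27-nonBVP} with $x=L_0$, $z=L_n$ and $y=(1-x-z)/(n-1)$. Lemma~\ref{lem:nonT-27-nonBVP} then identifies its unique minimizer as $L_0=L_n=x^\ast$, where $x^\ast$ is the stationary point of $S$ in the interval $(0,\min(1/2,1/(6|k|)))$. Combining Steps~2 and~3 gives the optimal configuration $x_1=1-x_n=x^\ast$ with interior nodes uniformly spaced between them.

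The main obstacle I anticipate is justifying that the sequential optimization (interior first, then boundary) truly yields the \emph{joint} minimizer rather than only a saddle. This works cleanly here because $r(N)^2$ splits as the boundary contribution in $(L_0,L_n)$ plus the interior sum constrained on a simplex of size $s=1-L_0-L_n$; Lemma~\ref{lem:2-7} provides the unique interior optimum as a smooth function of $s$, and plugging it back reduces the outer problem to Lemma~\ref{lem:nonT-27-nonBVP}, whose uniqueness statement delivers the global minimum.
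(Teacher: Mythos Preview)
Your proposal is correct and follows the same approach the paper uses: the formula for $r(N)$ is assembled from the per-interval initial errors computed in \eqref{eq:initial-R-c} and the Neumann-modified boundary contributions (as in Remark~\ref{re:Lax}), and then the optimization is done in two stages, first via Lemma~\ref{lem:2-7} for the interior and then via Lemma~\ref{lem:nonT-27-nonBVP} for $(L_0,L_n)$. The only cosmetic difference is that you invoke Lemma~\ref{lem:2-7} through an explicit rescaling $L_j=su_j$ (relying on Remark~\ref{re:k-R} since $ks$ need not be an integer), whereas the paper applies it directly on $[x_1,x_n]$; your concern about sequential versus joint minimization is well handled and indeed a non-issue, since nested minimization over a product constraint set always yields the global minimum.
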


\begin{rem}\label{rem:compare-x-ast}
Although we do not give an explicit formula for the point $x^\ast$ above,
it is easy to obtain the numerical solution for $x^\ast$ when $k$ and $n$ are known.
We want also to ask whether equidistant nodes, $x_j=\frac j{n+1},\ j=1,\dots,n$,
are optimal for some $k$ and $n$. The answer is negative. Firstly, it can only happen
if $n+1> 6 |k|$. We take $t= \frac{|k|}{n+1}\in(0,1/6)$ and find that, from Lemma \ref{lem:nonT-27-nonBVP},
\[
S^\prime\left(\frac 1{n+1}\right)= 2-4\cos(2 \pi t) + \frac 2{\pi^2} \frac{\sin(\pi t)}{t^2}
\left( 2 \pi t \cos(\pi t) - \sin(\pi t)  \right) > S^\prime(x^\ast) = 0, \quad\quad t \in \left(0,\frac16\right).
\]
This tells us that, $x^\ast < \frac 1 {n+1}$ if $n-1> 2.7 |k|$.
Even we have $x^\ast <\frac 1 {2n}$, since for the midpoint rule, i.e.,
$x_j=\frac {2j-1}{2n},\ j=1,\dots,n$,
\[
S^\prime\left(\frac 1{2n}\right)= 2-4\cos( \pi t) + \frac 2{\pi^2} \frac{\sin(\pi t)}{t^2}
\left( 2 \pi t \cos(\pi t) - \sin(\pi t)  \right)>0 , \quad\quad t = \frac{|k|}{n} \in \left(0,\frac13\right).
\]
That is, the  endpoints nearby
are much closer to the optimal $x_1$ and $x_n$ than $x_2$ and $x_{n-1}$, respectively, with the
distance $x^\ast < \frac 1 {2n} < \frac 1 {n+1} < \frac 1 {n}
< \frac{1- 2 x^\ast}{n-1} = x_{j+1}- x_j < \frac 1 {n-1},\ j=1,\dots,n-1$.
\qed
\end{rem}


\section{Oscillatory integrals: equidistant nodes}\label{Se:osci4}

In this section, we want to discuss the case of equidistant nodes
for the Sobolev space $H^1([0,1])$ of non-periodic functions.
Throughout this section, we assume that one uses equidistant nodes
\begin{equation}\label{equi-nodes}
x_j = \frac{j}{n},  \qquad j = 0, 1, \dots , n.
\end{equation}
This case was already studied by Boltaev et al. \cite{BHS16b},
using the S. L. Sobolev's method.

Then the oscillatory integral $I_{\rho_k}$ of the piecewise linear function $\sigma$
(the spline algorithm) is given by
\begin{equation}\label{spline-alg}
A_{n+1}^k(f) = I_{\rho_k}(\sigma) = \sum_{j=0}^n a_j f(x_j) ,
\end{equation}
where the coefficients $a_j$'s are given as follows.
We skip the proof since the result is known, see \cite[Theorem 8]{BHS16b}.

\begin{prop} \label{thm:spline_quadra}
Let $k\in\Z\backslash\{0\},\;n\in\N$, and $x_j = {j}/{n},\; j = 0, 1, \dots, n$.
Assume that $f:\;[0,1]\rightarrow\C$ is an integrable function
with $f(x_0),f(x_1), \dots, f(x_n)$ given, and $\sigma$ is the
piecewise linear function of $f$ at $n+1$ equidistant nodes $\{x_j\}_{j=0}^n$.
Then $I_{\rho_k}(\sigma)= \sum_{j=0}^n a_j f(x_j)$, where
 \[
\begin{split}
a_0 &= \frac n{4k^2\pi^2} \left(1 - \frac{2\pi i k}n  - e^{-2\pi i k/n}\right),\\
a_j &= \frac n{k^2\pi^2} \sin^2\left(\frac{\pi k}n\right)  e^{-2\pi i kj/n},\;\;\; j=1,\ldots,n-1,\\
a_n &= \frac n{4k^2\pi^2} \left(1 + \frac{2\pi i k}n  - e^{2\pi i k/n}\right),
\end{split}
\]
and $\sum\limits_{j=0}^n a_j =0$.
\qed
\end{prop}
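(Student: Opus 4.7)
The plan is to compute each coefficient directly from the identity $a_j = \int_0^1 \phi_j(x)\, e^{-2\pi i k x}\,dx$, where $\phi_j$ is the standard hat function with $\phi_j(x_i) = \delta_{ij}$, supported in $[x_{j-1},x_{j+1}] \cap [0,1]$. This identity is immediate from the representation $\sigma(x) = \sum_{j=0}^n f(x_j)\phi_j(x)$ of the piecewise linear interpolant and the linearity of $I_{\rho_k}$.

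The one building block I would compute first, by a single integration by parts, is
\[
I_1 := \int_0^{1/n} u\, e^{-2\pi i k u}\,du = \frac{1}{4\pi^2 k^2}\left[\Bigl(1+\tfrac{2\pi i k}{n}\Bigr) e^{-2\pi i k/n} - 1 \right],
\]
together with its conjugate analogue $I_2 := \int_0^{1/n} u\, e^{2\pi i k u}\,du$, obtained by sending $k \mapsto -k$.

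For an interior index $1 \le j \le n-1$, the substitutions $u = x - x_{j-1}$ on the rising half of $\phi_j$ and $u = x_{j+1} - x$ on the falling half give
\[
a_j = n\, e^{-2\pi i k(j-1)/n}\,I_1 + n\, e^{-2\pi i k(j+1)/n}\,I_2 = n\, e^{-2\pi i k j/n}\bigl[e^{2\pi i k/n}I_1 + e^{-2\pi i k/n}I_2\bigr].
\]
Inserting the closed forms for $I_1, I_2$, the terms involving $\tfrac{2\pi i k}{n}$ cancel and the bracket collapses to $\tfrac{2 - 2\cos(2\pi k/n)}{4\pi^2 k^2} = \tfrac{\sin^2(\pi k/n)}{\pi^2 k^2}$, which yields the stated formula for $a_j$.

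For the boundary coefficients, $\phi_0(x) = 1 - nx$ on $[0,1/n]$ gives $a_0 = \tfrac{e^{-2\pi i k/n}-1}{-2\pi i k} - nI_1$; rationalizing the first summand (via $\tfrac{1}{-2\pi i k} = \tfrac{2\pi i k}{4\pi^2 k^2}$) and collecting over the common denominator $4\pi^2 k^2$ produces the claimed expression for $a_0$. The coefficient $a_n$ is the mirror image: the shift $u = x - x_{n-1}$ on $[x_{n-1},1]$ together with the identity $e^{-2\pi i k} = 1$ for $k \in \Z$ gives $a_n = n\, e^{2\pi i k/n} I_1$, which simplifies to the stated formula. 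Finally, $\sum_j a_j = 0$ is automatic, since the constant $f \equiv 1$ is its own piecewise linear interpolant and $I_{\rho_k}(1) = \int_0^1 e^{-2\pi i k x}\,dx = 0$ for $k \in \Z \setminus \{0\}$. The only real obstacle in executing this plan is the careful bookkeeping of signs and factors of $i$; there is no conceptual difficulty.
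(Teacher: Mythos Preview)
Your proposal is correct: the hat-function decomposition $\sigma=\sum_j f(x_j)\phi_j$ reduces the claim to computing $a_j=\int_0^1\phi_j(x)e^{-2\pi ikx}\,dx$, and your evaluations of $I_1$, the interior bracket $e^{2\pi ik/n}I_1+e^{-2\pi ik/n}I_2=\sin^2(\pi k/n)/(\pi^2k^2)$, and the boundary terms $a_0$, $a_n$ (using $e^{-2\pi ik}=1$) all check out, as does the partition-of-unity argument for $\sum_j a_j=0$.

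As for comparison: the paper does not actually prove this proposition. It writes ``We skip the proof since the result is known, see \cite[Theorem 8]{BHS16b}'', where the cited work derives the coefficients via S.~L.~Sobolev's method for optimal quadrature. Your approach is therefore not the paper's own, but it is a perfectly valid, elementary, and self-contained alternative that avoids any appeal to the external reference.
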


\begin{rem}\label{re:equi-QMC}
We comment on the weights $a_j$ in Proposition~\ref{thm:spline_quadra}. Obviously, for every $j=1,\dots,n-1$, we have
$$
\lim_{n\rightarrow\infty} a_j e^{2\pi ikj/n} \cdot n = 1\;\;\;\text{and}\;\;\;
\lim_{n\rightarrow\infty} a_0\cdot n  = \lim_{n\rightarrow\infty} a_n\cdot n = \frac 12.
$$
Therefore, we conclude that for sufficiently large $n$, the linear algorithm
is almost a QMC (quasi Monte Carlo) algorithm with equidistant nodes, which is used in \cite{NUW15}.
\qed
\end{rem}

Clearly,  from Theorem~\ref{thm3}, the algorithm $A^{k}_{n+1}$ with equidistant nodes is optimal
for the space $H^1_0$ in the worst case if $n\ge 2.7 |k|$. Here,
$n$ stands for the number of the intervals.
Boundary values are fixed
for $f\in H^1_0([0,1])$, i.e., $f(0)=f(1)=0$.

Furthermore, we have the following assertion for the space $H^1$,
in which the point (i) is already proved in \cite[Theorem 9]{BHS16b}.

\begin{thm} \label{thm6}
Consider the integration problem $I_{\rho_k}$ defined
for functions from the space $H^1([0,1])$.
Suppose $k\in\Z$ and $k\neq 0$.
\qquad
\begin{enumerate}[(i)]
\item The worst case error of $A^{k}_{n+1}$, $n\in\N$, is
$$
e  (A^{k}_{n+1}, I_{\rho_k}, H^1)=
\frac1{2\pi |k|} \left( 1 - \frac{n^2}{k^2\pi^2} \sin^2\left(\frac{ k \pi}{n} \right) \right)^{1/2}.
$$
\item
For $n\in\N$, we have
$$
e  (A^{k}_{n+1}, I_{\rho_k}, H^1) <  \, e (0, I_{\rho_k}, H^1_0) = \frac1{2\pi |k|},\;\;\text{if}\;\;\; k \neq 0\;\text{mod}\; n.
$$
\item
For fixed $n\in\N$, we have
\[
\lim\limits_{|k|\rightarrow\infty} e  (A^{k}_{n+1}, I_{\rho_k}, H^1)\,\cdot\,  |k| =  \frac{1}{2\pi} .
\]
\item
For any $k\in\Z\backslash\{0\}, n\in\N$, we have
\[
e  (A^{k}_{n+1}, I_{\rho_k}, H^1) \le \frac 1{2\sqrt 3}\, \frac 1{n}.
\]
\item
For fixed $k\in\Z\backslash\{0\}$, we have the sharp constant of asymptotic equivalence $\frac 1{2\sqrt 3}$, i.e.,
\[
\lim\limits_{n\rightarrow\infty}e  (A^{k}_{n+1}, I_{\rho_k}, H^1) \,\cdot\, n = \frac 1{2\sqrt 3}.
\]
\qed
\end{enumerate}
\end{thm}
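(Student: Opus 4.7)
The overall plan is to reduce parts (ii)--(v) to the closed-form expression in (i); the only substantive formula that must be proved from scratch is (i), after which the remaining items are short calculus and algebra.

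For part (i) I would re-derive the formula by applying Theorem~\ref{thm5}. In that theorem's notation, relabel the equidistant nodes as $\tilde x_j=(j-1)/n$, $j=1,\ldots,n+1$, so that $\tilde x_1=0$ and $\tilde x_{n+1}=1$; then the boundary lengths $L_0=\tilde x_1$ and $L_{n+1}=1-\tilde x_{n+1}$ both vanish, so the two terms $L-\sin(2\pi kL)/(\pi k)$ contribute nothing. The $n$ remaining intervals all have length $1/n$, so the interior sum equals $n\cdot n\sin^2(\pi k/n)=n^2\sin^2(\pi k/n)$, and the displayed formula drops out. (This is also \cite[Thm.~9]{BHS16b}.)

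Parts (ii) and (iii) are immediate from (i). The strict inequality in (ii) is equivalent to the strict positivity of $\sin^2(k\pi/n)$, which holds exactly when $k\not\equiv 0\pmod n$. For (iii), with $n$ fixed we have $\sin^2(k\pi/n)=O(1)$, so $n^2\sin^2(k\pi/n)/(\pi^2k^2)\to 0$ as $|k|\to\infty$; the bracket tends to $1$ and multiplying by $|k|$ gives the limit $1/(2\pi)$.

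The technical core is part (iv). Setting $t=\pi k/n$, the bound $e(A^{k}_{n+1},I_{\rho_k},H^1)\le 1/(2\sqrt 3\,n)$ rewrites as the scalar inequality
\[
\sin^2 t\ \ge\ t^2-\tfrac13 t^4\qquad\text{for all }t\in\R.
\]
I would treat this by cases: for $t^2\ge 3$ the right-hand side is non-positive and the inequality is automatic, while for $0\le t<\sqrt 3$ I would invoke the elementary estimate $\sin t\ge t-t^3/6$ (which follows because $g(t):=\sin t-t+t^3/6$ satisfies $g(0)=g'(0)=g''(0)=0$ and $g'''(t)=1-\cos t\ge 0$, hence $g\ge 0$). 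On $[0,\sqrt 3)$ both sides of $\sin t\ge t-t^3/6$ are non-negative, so squaring yields $\sin^2 t\ge(t-t^3/6)^2=t^2-t^4/3+t^6/36\ge t^2-t^4/3$; evenness extends the inequality to all real $t$. I expect this to be the main obstacle: one must locate the right auxiliary inequality that reproduces the sharp constant $1/(2\sqrt 3)$ uniformly in $k$ and $n$.

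For part (v), a Taylor expansion for fixed $k$ as $n\to\infty$ gives $\sin^2(k\pi/n)=(k\pi/n)^2-\tfrac13(k\pi/n)^4+O(n^{-6})$, so
\[
1-\frac{n^2\sin^2(k\pi/n)}{\pi^2k^2}=\frac{\pi^2k^2}{3n^2}+O(n^{-4}).
\]
Taking the square root and multiplying by $n/(2\pi|k|)$ then sends $e(A^{k}_{n+1},I_{\rho_k},H^1)\cdot n$ to $1/(2\sqrt 3)$, which together with (iv) shows that this asymptotic constant is indeed sharp.
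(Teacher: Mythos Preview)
Your proposal is correct and follows essentially the paper's plan: derive the closed form in (i) and read off (ii)--(v) by elementary analysis.  Two minor differences are worth noting.  For (i), the paper reduces directly to Theorem~\ref{thm3} by observing that the equidistant nodes include the endpoints $0$ and $1$, so $N(f)=0$ forces $f\in H^1_0$ and the $H^1_0$ formula applies verbatim; your route through Theorem~\ref{thm5} with $L_0=L_{n+1}=0$ is equivalent.  For (iv), the paper writes $\sin^2(k\pi/n)=\tfrac12\bigl(1-\cos(2k\pi/n)\bigr)$ and bounds the Lagrange remainder $R_3$ of the cosine expansion to obtain $1-\frac{n^2}{k^2\pi^2}\sin^2(k\pi/n)\le\frac13(k\pi/n)^2$, whereas you square the cubic Taylor bound $\sin t\ge t-t^3/6$ on $[0,\sqrt3)$; both arguments prove the same scalar inequality $\sin^2 t\ge t^2-\tfrac13 t^4$ and hence the sharp constant $1/(2\sqrt3)$.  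Your version has the slight advantage of making the case split $t^2\ge 3$ versus $t^2<3$ explicit, so it is transparent that the bound holds for all $k,n$ without restriction.
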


\begin{proof}
The point (i) follows from Theorem~\ref{thm3} directly since $N(f)=0$ tells us
that $f(0)=f(1)=0$ and $f\in H_0^1$. Then points (ii) and (iii) follow clearly.
We use Taylor's expansion of the cosine function at zero. For any $k\in\Z\backslash\{0\}, n\in\N$,
\[
\sin^2\left(\frac{k\pi}{n}\right) = \frac {1 -\cos\left(\frac{2k\pi}{n}\right)}2
=\frac{k^2\pi^2}{n^2} -\frac12 R_3\left(\frac{2k\pi}{n}\right)
.
\]
Here, the third Lagrange's remainder term satisfies, for some $\theta=\theta\left(\frac{2k\pi}{n}\right) \in (0,1)$,
\[
\abs{R_3 \left(\frac{2k\pi}{n} \right)} = \abs{\cos^{(4)} \left(\theta \cdot \frac{2k\pi}{n} \right)}
\cdot \frac{({2k\pi})^4}{4!\cdot n^4} \le \frac 23 \left(\frac{k\pi}{n} \right)^4.
\]
This implies that, for any $k\in\Z\backslash\{0\}, n\in\N$,
\[
0 <
1 - \frac{n^2}{k^2 \pi^2} \sin^2\left(\frac{ k \pi}{n} \right) =
 \frac{n^2}{2 k^2 \pi^2} \cdot \abs{R_3\left(\frac{2k \pi}{n}\right)}  \le \frac 13 \left(\frac{k \pi}{n} \right)^2.
\]
Hence,  for any $k\in\Z\backslash\{0\}, n\in\N$,
\[
e  (A^{k}_{n+1}, I_{\rho_k}, H^1) \le \frac 1{2\sqrt 3}\, \frac 1{n}.
\]
This proves (iv).

Moreover, if $k$ is fixed and nonzero, we have that for any $\theta\in(0,1)$,
\[
\lim_{n\rightarrow\infty}\cos^{(4)}\left(\theta \cdot \frac{2k\pi}{n}\right)=1.
\]
 This leads to
\[
\lim\limits_{n\rightarrow\infty} e (A^{k}_{n+1}, I_{\rho_k}, H^1) \,\cdot\, n = \frac 1{2\sqrt 3},
\]
as claimed in (v).
\end{proof}


We comment on Theorems~\ref{thm3} and~\ref{thm6}.
Theorem~\ref{thm6} deals with $k\in\Z\backslash\{0\}$ and equidistant nodes, while Theorem~\ref{thm3} works even for $k\in\R\backslash\{0\}$.
However, Theorem~\ref{thm3} studies only the space $H^1_0$ instead of $H^1$.

For $k\in\R\backslash\{0\}$, the same statements, as in Theorem~\ref{thm6},
hold true for the space $H^1_0$, since the spline algorithm is optimal.
Due to the zero boundary values,
the number of information is $n-1$ for  $H^1_0$, instead of $n+1$. This is indeed a special case of Theorem~\ref{thm3}.

Moreover, thanks to the equidistant nodes including endpoints, the formula in point (i) of Theorem~\ref{thm6} remains valid
for $k\in\R\backslash\{0\}$ (and $H^1$), as well as points (iii)-(v).
In the computation of $r(N, H^1)$, we usually work with
\[
N(f)=\left(f(0),f\left(\frac 1n \right),\dots, f\left(\frac{n-1}n\right), f(1) \right)=0 \quad  \text{for} \quad f \in H^1([0,1]).
\]
This is equivalent to the computation of $r(N_1, H^1_0)$ in Theorem~\ref{thm3} with
\[
N_1(f)=\left(f\left(\frac 1n \right), \dots, f\left(\frac{n-1}n \right) \right)=0 \quad  \text{for} \quad f \in H^1_0([0,1]).
\]
That is shortly, for $k\in\R\backslash\{0\}$,
\[
\begin{split}
e  (A^{k}_{n+1}, I_{\rho_k}, H^1)&= r(N, H^1) =
\sup_{f\in H^1:\, \Vert f \Vert \le 1 \atop \ N(f)=0}  | I_\rho (f) |=
\sup_{f\in H^1_0:\, \Vert f \Vert \le 1 \atop \ N_1(f)=0}  \abs{ I_\rho (f) }\\
&=
r(N_1, H^1_0) =
\frac1{2\pi |k|} \left( 1 - \frac{n^2}{k^2\pi^2} \sin^2\left(\frac{ k \pi}{n} \right) \right)^{1/2}.
\end{split}
\]


\begin{rem}
It is easy to prove that these asymptotic statements (iii) and (v)
also hold for optimal nodes, i.e., for the numbers
$e(n, I_{\rho_k}, H^1)$ with $k\in\R\backslash\{0\}$.
More precisely,  for fixed $n$ and $k\rightarrow \infty$, one can take $L_0=L_n=0$ in Theorem~\ref{thm5} to get the asymptotic property of
$e(n, I_{\rho_k}, H^1)$.
For fixed $k\in\R\backslash\{0\}$ and $n\rightarrow \infty$, Theorem~\ref{thm5} gives by Taylor's expansions the same asymptotic constant for
$e(n, I_{\rho_k}, H^1)$ since $x^\ast < \frac 1 {2n}$ and $\frac 1 {n}< \frac{1- 2 x^\ast}{n-1} < \frac 1 {n-1}$.
Finally, together with Corollary~\ref{cor: asym-2sq3}, we find out the same asymptotic
constants, $1/(2 \pi)$ and $1/(2\sqrt 3)$, for both the spaces $H_0^1$ and $H^1$.
\end{rem}

\noindent
\subsection*{Acknowledgement}
This
work was started while S. Zhang was visiting Theoretical Numerical Analysis Group
at Friedrich-Schiller-Universit\"at Jena. He is extremely grateful for their hospitality.

\end{document}